\newtheorem{thm}{Theorem}[section]
      \newtheorem{cor}[thm]{Corollary}
      \newtheorem{lemma}[thm]{Lemma}
      \newtheorem{rem}[thm]{Remark}
\begin{document}

\title[Xer recombination on catenanes]
{Rational tangle surgery and \\ Xer recombination on catenanes}
\author{Isabel K. Darcy, Kai Ishihara, Ram K. Medikonduri, and Koya Shimokawa}
\address{Department of Mathematics \& Applied Mathematical and Computational
Sciences,
University~of~Iowa,
14 MLH
Iowa City, IA 52242, 
USA (Isabel K. Darcy)}
\email{idarcy.math@gmail.com}
\address{Department of Mathematics,
Imperial College London,
London SW7 2AZ, UK (Kai Ishihara)}
\email{k.ishihara@imperial.ac.uk}
\address{Pyxis Solutions, New York, NY 10004, USA (Ram K. Medikonduri)}
\email{medukishore@gmail.com}
\address{Department of Mathematics,
                      Saitama University,
                      Saitama, 338-8570, Japan (Koya Shimokawa)}
\email{kshimoka@rimath.saitama-u.ac.jp}
%


\begin{abstract}
The protein recombinase can change the knot type of circular DNA.  
The action of a recombinase converting one knot into another knot is normally mathematically modeled by band surgery.  Band surgeries on a $2$-bridge knot $N(\frac{4mn-1}{2m})$ yielding a $(2,2k)$-torus link are characterized.  We apply this and other rational tangle surgery results to analyze Xer recombination on DNA catenanes using the tangle model for protein-bound DNA.  
\end{abstract}

\maketitle

\section{Introduction}

A tangle consists of arcs properly embedded in a three dimensional ball. A protein-DNA complex can be regarded as a tangle in which the protein complex is considered as a three dimensional ball and the DNA within the complex as arcs \cite{ES1, SECS}.
In general it is very hard to identify the arrangement of DNA within a
protein-DNA complex. Electron micrographs, AFM (Atomic Force Microscopy)
images, and crystalline structures do not give clear enough data to
address this problem for large molecules. Thus the tangle model is
frequently used  in analyzing the topology of protein-bound DNA \cite{BV, DS1, ES1, ES2, VS}.

The proteins we are interested in are ones which are involved in recombination. 
{\em Site-specific recombination} is a process in which specific target sequences on each of two DNA segments are exchanged. These specific sites are called {\em recombination sites}. Proteins that carry out these recombination reactions are called {\em recombinases}. Site-specific recombination reactions are involved in a variety of biological processes including transposition of DNA, integration into host chromosomes, and gene regulation.

During recombination, the topology of circular DNA can change forming knots and links.  The local action of a recombinase has been modeled by the mathematical operation of   
a band surgery or 
a rational tangle surgery.
In this paper we will characterize 
such surgeries
on a $2$-bridge knot $N(\frac{4mn-1}{2m})$ yielding  a $(2,2k)$-torus link.
The class of knots $N(\frac{4mn-1}{2m})$ includes the family of twist knots which frequently appear in biological reactions.  It also includes knots which are believed to be the products of Xer recombination when Xer acts on  $(2,2k)$-torus links.  

In section \ref{tangles}, we give mathematical preliminaries.  In section \ref{result}, we state the main result which we apply  to Xer recombination in section \ref{xer}.  We prove our main result in section \ref{proof}. In section \ref{non-band} we consider non-band rational tangle surgery cases related to Xer recombination.
In section \ref{summary} we summarize our results and briefly discuss the software TopoIce-R \cite{Darcy15072006} within Knotplot \cite{SchPhD} which implements the results of Theorem \ref{Thm:rts}.

\section{Tangles and 2-bridge knots and links}\label{tangles}
 
Let $T$, $T_1$, and $T_2$ be $2$-string tangles. 
The knot (or link) obtained by connecting the top two endpoints of $T$ and the bottom two endpoints by simple curves, as shown in Figure \ref{num}, is called the {\em numerator closure} and is denoted by $N(T)$.
From two tangles $T_1$ and $T_2$ a new tangle, called the {\em sum} of $T_1$ and $T_2$ and denoted by $T_1+T_2$, can be obtained by connecting two endpoints of $T_1$ to two endpoints of $T_2$ as shown in Figure \ref{tsum}. 
For more on tangles see \cite{Conway, KG, Mu1, SECS}.

\begin{figure}[htbp]
\begin{center}
\begin{minipage}{0.49\textwidth}
\begin{center}
\includegraphics[scale=0.5]{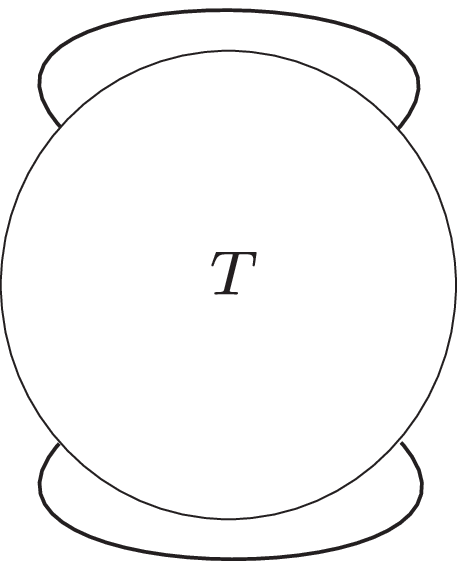}
\end{center}
\caption{The numerator $N(T)$.}
\label{num}
\end{minipage}
\begin{minipage}{0.5\textwidth}
\begin{center}
\includegraphics[scale=0.5]{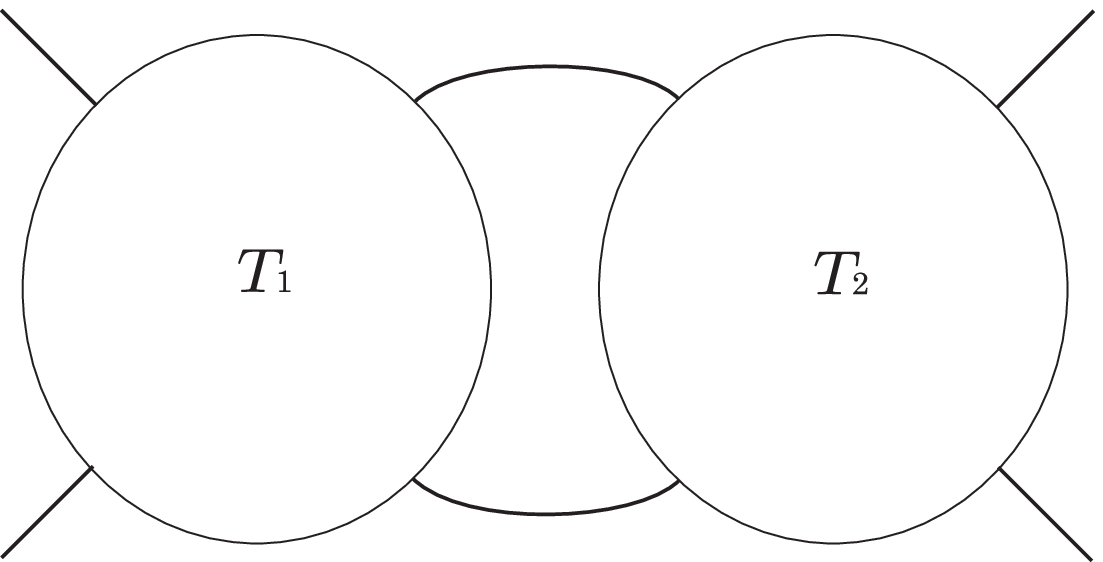}
\end{center}
\caption{The tangle sum $T_1+T_2$.}
\label{tsum}
\end{minipage}
\end{center}
\end{figure}

Let $c_1,\ldots,c_n$ be a sequence of integers. The {\em circle product} of a tangle $T$ and $(c_1,\ldots,c_n)$ is defined as shown in Figure \ref{cpr} and is denoted by $T\circ (c_1,\ldots,c_n)$ \cite{D_unor}.  A rational tangle is the circle product of a zero crossing tangle  and $(c_1,\ldots,c_n)$.  The zero crossing tangle contains two vertical strings if $n$ is even or two horizontal strings if $n$ is odd.  
Rational tangles are classified up to ambient isotopy fixing the boundary by their continued fraction $c_n + \frac{1}{c_{n-1} + \frac{1}{ ... + \frac{1}{c_1}}}$ \cite{Conway}.

\begin{figure}[htbp]
\begin{center}
\includegraphics[scale=0.5]{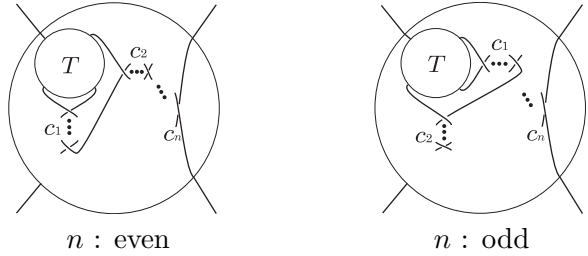}

$n$ : even\hspace{3.5cm}$n$ : odd
\end{center}
\caption{Circle products $T\circ (c_1,\ldots,c_n)$ of $T$ and $(c_1,\ldots,c_n)$: 
The vertical twists are left-handed if $c_i > 0$ and right-handed if $c_i < 0$ while the horizontal twists are right-handed if $c_i > 0$ and left-handed if $c_i < 0$.
}
\label{cpr}
\end{figure}

 The  knot/link, $N(\frac{a}{b})$ can be formed from the rational tangle $\frac{a}{b}$ via numerator closure. $N(\frac{a}{b})$ is a 2-bridge knot/link except when $|a| = 1$ in which case it is the unknot.  $N(\frac{a}{b})$ is a link if and only if $a$ is even in which case it is a 2-component link.
The 2-component link $N(2k)$ is called a $(2,2k)$-torus link. 
Given a projection of a link, the linking number of a 2-component link can be determined by calculating the sum over all crossings involving both components of the signed crossing number.  

Let $P$ and $R$ be rational tangles. The operation which transforms a knot (or link) by replacing $P$ with $R$ 
is called a {\em rational tangle surgery}, and we will refer to this as a {\em $(P, R)$} move. See Figure 
\ref{rts}.   A $(P, R)$ move is said to be {\em equivalent} to a $(P', R')$ move if and only if  for every pair of knots, 
$K_1$ and $K_2$,  there exists $U$ satisfying the system of tangle equations $N(U + P) = K_1$, $N(U + R) = K_2$ 
if and only if  there exists $U'$ satisfying the system of tangle equations $N(U' + P') = K_1$, $N(U' + R') = 
K_2$.  The following two theorems classify equivalent  $(P, R)$ moves when $P$ and $R$ are rational tangles.

\begin{figure}[htbp]
\begin{center}
\includegraphics[scale=0.5]{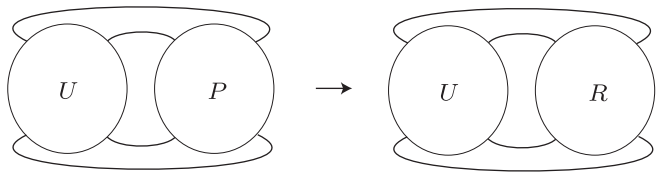}
\end{center}
\caption{A rational tangle surgery:  $P$ and $R$ are rational $2$-string tangles.  $U$ is a $2$-string tangle.}
\label{rts}
\end{figure}

\begin{thm}\cite{D_unor}
 A $(0, \frac{t}{w})$ move is equivalent to a $(0, \frac{c}{d})$ move if and only if $\frac{c}{d} = \frac{t}{w - ht}$ for some
$h$.  Moreover, $N(U + \frac{0}{1}) = K_1$ and $N(U + \frac{t}{w} ) = K_2$ if and only if $N([U \circ (h, 0)] + \frac{0}{1}) =
K_1$ and $N([U \circ (h, 0)] + \frac{t}{w - ht} ) = K_2$.
 \label{t-equival}
 \end{thm}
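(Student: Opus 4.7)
The plan is to establish the explicit identity in the ``moreover'' clause first by direct planar isotopy; the direction $(\Leftarrow)$ then follows immediately, and the direction $(\Rightarrow)$ is handled by a test-tangle argument together with Schubert's classification.

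For the moreover identity, I would interpret $U\mapsto U\circ(h,0)$ via Figure~\ref{cpr}: with $n=2$ even, this circle product inserts $h$ twists on one side of $U$ while leaving the other side unchanged. I would verify by planar isotopy the two identities
\[
N\bigl(U\circ(h,0)+0/1\bigr)\,=\,N(U+0/1)
\qquad\text{and}\qquad
N\bigl(U\circ(h,0)+t/(w-ht)\bigr)\,=\,N(U+t/w).
\]
The first holds because the inserted twists lie on arcs that are joined into an unknotted loop by the numerator closure with the $0$ tangle, and can therefore be slid off. For the second, the $h$ twists can be pushed across the tangle sum into the rational factor, where they shift the outermost coefficient of its Conway continued fraction by $-h$; the algebraic identity
\[
\frac{t}{w-ht}\,=\,\cfrac{1}{\,w/t-h\,}
\]
confirms that the absorbed twists turn $t/w$ into $t/(w-ht)$. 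Given these identities, the direction $(\Leftarrow)$ is immediate: if $c/d=t/(w-ht)$ and $U$ satisfies the $(0,t/w)$-system for $(K_1,K_2)$, then $U':=U\circ(h,0)$ satisfies the $(0,c/d)$-system for the same pair; the map is inverted by $U'\mapsto U'\circ(-h,0)$, so the correspondence is a bijection for every $(K_1,K_2)$.

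For $(\Rightarrow)$, assume the moves are equivalent. I would apply the equivalence to well-chosen rational test tangles $U=r/s$, so that $K_1=N(r/s)$ and $K_2=N(r/s+t/w)$ are $2$-bridge links whose invariants are computable by Conway arithmetic, and exploit the resulting existence of a tangle $U'$ realising the analogous $(0,c/d)$-system. Comparing the resulting $2$-bridge invariants via Schubert's classification theorem as $r/s$ varies pins down $c/d$ up to the substitution $w\mapsto w-ht$, yielding $c/d=t/(w-ht)$ for some integer $h$.

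The main obstacle is this last direction. A single test tangle such as $U=0/1$ already forces $N(c/d)=N(t/w)$, but Schubert's theorem permits two families of solutions ($d\equiv w\pmod t$ and $dw\equiv 1\pmod t$), so at least one further test tangle is needed to eliminate the spurious ``inverse'' family. At the same time, each produced $U'$ must be shown to behave rationally -- that is, to be equivalent under the relevant closures to a rational tangle -- before Schubert's theorem can be applied. This rationality reduction is the technical core of the argument and is likely to require a classification of tangles with specified numerator closure (for instance via Hatcher--Thurston style results on $2$-bridge knot exteriors); once it is in place, the remaining continued-fraction manipulations are routine.
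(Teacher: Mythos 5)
This theorem is quoted from \cite{D_unor} without proof in the present paper, so there is no in-paper argument to compare against; your proposal can only be judged on its own terms. Your handling of the ``moreover'' clause and of the $(\Leftarrow)$ direction is correct and is the standard isotopy argument: the $h$ vertical twists inserted by $U\mapsto U\circ (h,0)$ are absorbed by the $\frac{0}{1}$ tangle in the first equation (since $\frac{0}{1+h\cdot 0}=\frac{0}{1}$) and convert $\frac{t}{w-ht}$ into $\frac{t}{(w-ht)+ht}=\frac{t}{w}$ in the second, with the correspondence inverted by $U'\mapsto U'\circ (-h,0)$.

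The genuine gap is in the $(\Rightarrow)$ direction. Equivalence of moves, as defined in the paper, only asserts the \emph{existence} of some solution $U'$ for each realizable pair $(K_1,K_2)$; it does not transport a specific $U$ to a specific $U'$. So your claim that the test tangle $U=0/1$ ``already forces $N(c/d)=N(t/w)$'' does not follow: from $N(U'+0)=\text{unlink}$ and $N(U'+\frac{c}{d})=N(\frac{t}{w})$ you cannot conclude that $U'$ is the $0$ tangle, nor even that it is rational. The same issue recurs for every test tangle: before Schubert's classification can be applied you must control \emph{all} tangles $U'$ two of whose rational fillings close to the prescribed $2$-bridge links. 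That rationality reduction is not a routine step to be deferred --- it is the main content of \cite{D_unor} and \cite{E}, and it rests on Dehn-surgery machinery in the double branched cover (the Cyclic Surgery Theorem and distance bounds between lens space fillings), not on Schubert's theorem plus continued-fraction arithmetic. Since you explicitly postpone exactly this step, the ``only if'' half of the theorem (and hence the elimination of the spurious inverse family $dw\equiv 1\pmod t$, which also requires producing a pair realized by one move but not the other) remains unproven in your write-up.
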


 \begin{thm}\cite{D_unor} 
An $(\frac{f_1}{g_1}, \frac{f_2}{g_2})$ move is equivalent to a $(0, \frac{t}{w})$ move if and only
if there exists $e_1$ and $i_1$ such that $g_1e_1 - f_1i_1 = 1$ and $\frac{t}{w} =
 \frac{g_1f_2 - g_2f_1}{e_1g_2 - i_1f_2}$ (or equivalently, $\frac{f_2}{g_2} = \frac{te_1 + wf_1}{ti_1 + wg_1})$
\label{t-ratequiv} \end{thm}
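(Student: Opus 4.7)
The plan is to interpret equivalence of rational tangle surgeries through the action of the mapping class group of the four-punctured Conway sphere on rational tangles, which at the level of fractions is the standard $PSL_2(\mathbb{Z})$-action on $\mathbb{Q}\cup\{\infty\}$ by Möbius transformations. Any matrix $M\in SL_2(\mathbb{Z})$ is realized by an orientation-preserving self-homeomorphism of the tangle ball containing $P$ that extends to a homeomorphism of $S^3$, and hence preserves numerator closures.

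For the sufficiency direction, the Bezout relation $g_1 e_1 - f_1 i_1 = 1$ is precisely the statement that $M=\begin{pmatrix} g_1 & -f_1 \\ -i_1 & e_1 \end{pmatrix}$ lies in $SL_2(\mathbb{Z})$. A direct computation verifies $M\cdot \frac{f_1}{g_1}=0$ and $M\cdot \frac{f_2}{g_2}=\frac{g_1 f_2 - g_2 f_1}{e_1 g_2 - i_1 f_2}=\frac{t}{w}$. Realizing $M$ by such a homeomorphism $\Phi$, for any $U$ with $N(U+\frac{f_1}{g_1})=K_1$ and $N(U+\frac{f_2}{g_2})=K_2$, the tangle $U'=\Phi(U)$ satisfies $N(U'+0)=K_1$ and $N(U'+\frac{t}{w})=K_2$. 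The alternative formulation $\frac{f_2}{g_2}=\frac{te_1+wf_1}{ti_1+wg_1}$ is simply the inverse Möbius transformation $M^{-1}\cdot \frac{t}{w}$, and applying $M^{-1}$ (which again lies in $SL_2(\mathbb{Z})$) yields the reverse implication, establishing full equivalence.

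For the necessity direction, I would specialize $U$ to a simple family of test tangles such as integer tangles $U=n$, compute the resulting pairs $(K_1,K_2)$ as 2-bridge links via the Schubert classification, and exploit the assumed equivalence to produce corresponding $U'$ realizing the same pairs via a $(0,\frac{t}{w})$-move. The freedom in choosing $U'$ is parameterized precisely by Theorem \ref{t-equival}; after absorbing this into a single integer degree of freedom, the residual constraints on $(t,w)$ in terms of $f_1,g_1,f_2,g_2$ should force both the determinant identity $g_1e_1-f_1i_1=1$ and the stated formula for $\frac{t}{w}$.

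The main obstacle lies in this second direction: sufficiency is essentially an exercise in the $PSL_2(\mathbb{Z})$-action, but extracting the matrix relation from the mere coincidence of all realized pairs requires careful algebraic bookkeeping to rule out accidental equalities. Theorem \ref{t-equival} is the key tool that collapses the $U'$-ambiguity to a one-parameter family, leaving the Bezout condition as the sole surviving constraint. In practice, I would work out the case $U=n$ explicitly, varying $n$ to generate enough 2-bridge fractions to pin down $e_1$ and $i_1$ uniquely modulo the twist parameter $h$ of Theorem \ref{t-equival}.
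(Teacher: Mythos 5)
First, note that this paper does not prove Theorem~\ref{t-ratequiv} at all: it is imported verbatim from \cite{D_unor}, so there is no internal proof to compare against, and your proposal has to stand on its own.

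Your sufficiency direction is essentially sound. The matrix $M=\left(\begin{smallmatrix} g_1 & -f_1 \\ -i_1 & e_1\end{smallmatrix}\right)$ has determinant $g_1e_1-f_1i_1=1$, sends $\frac{f_1}{g_1}\mapsto 0$ and $\frac{f_2}{g_2}\mapsto\frac{g_1f_2-g_2f_1}{e_1g_2-i_1f_2}$, and $M^{-1}=\left(\begin{smallmatrix} e_1 & f_1 \\ i_1 & g_1\end{smallmatrix}\right)$ recovers the ``or equivalently'' clause; all of that checks. The one step you state too quickly is that \emph{every} element of $SL_2(\mathbb{Z})$ is ``compensable'': extending a homeomorphism of the $P$-ball over $S^3$ is trivial, but you must also check that the complementary side is carried to a tangle-plus-standard-closure-arcs configuration, i.e.\ that the move on $P$ can be absorbed into a move on $U$. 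The honest way to do this is on generators: $\left(\begin{smallmatrix}1&1\\0&1\end{smallmatrix}\right)$ is compensated by $U\mapsto U+(-1)$, and $\left(\begin{smallmatrix}1&0\\-h&1\end{smallmatrix}\right)$ by $U\mapsto U\circ(h,0)$ as in Theorem~\ref{t-equival}; these generate $SL_2(\mathbb{Z})$. (The source \cite{D_unor} instead works in the double branched cover, where changing $P$ is changing the filling slope of an attached solid torus; the two viewpoints agree.) This is a fixable gloss, not a fatal error.

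The necessity direction is a genuine gap: as written it is a plan (``should force,'' ``I would work out''), not an argument, and the plan would not succeed as described. Testing on integer tangles $U=n$ only shows that certain pairs $(K_1,K_2)$ \emph{are} realized; to show two moves are \emph{inequivalent} you must exhibit a pair realized by one and prove that \emph{no} tangle $U'$ whatsoever (not just rational or integer ones) realizes it for the other, which is exactly the hard Dehn-surgery/cyclic-surgery content of \cite{E, D_unor} and cannot be replaced by ``algebraic bookkeeping.'' You come close to the correct shortcut without taking it: equivalence of moves is transitive, and your sufficiency argument already shows the $(\frac{f_1}{g_1},\frac{f_2}{g_2})$ move is equivalent to one specific $(0,\frac{t_0}{w_0})$ move. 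Hence any $(0,\frac{t}{w})$ move equivalent to it is equivalent to $(0,\frac{t_0}{w_0})$, so by the ``only if'' part of Theorem~\ref{t-equival}, $\frac{t}{w}=\frac{t_0}{w_0-ht_0}$; and replacing $(e_1,i_1)$ by $(e_1+hf_1,\, i_1+hg_1)$ preserves $g_1e_1-f_1i_1=1$ and changes the denominator $e_1g_2-i_1f_2$ to $w_0-ht_0$, so this orbit is exactly the set of fractions the theorem allows. That reduction makes necessity a two-line corollary of Theorem~\ref{t-equival} --- but it shifts all the real work onto the ``only if'' half of Theorem~\ref{t-equival}, which is likewise only cited here, so your proof would still rest on an unproved input.
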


As discussed in \cite{D_unor}, any solution for $U$ to $N(U + P) = K_1$, $N(U + R) = K_2$ can be translated 
into a solution for $U'$ to $N(U' + 0) = K_1$, $N(U' + R') = K_2$ and vice versa.  Hence we will often focus on the $P = 0$ case.

Let $L$ be a link in $S^3$. 
Let $b : I\times I \to S^3$ be a band 
satisfying $b^{-1}(L)=I\times \partial I$, 
where $I=[0,1]$ is an interval. 
Let $L_b$ denote a link obtained by replacing $b(I\times \partial I)$ in $L$ 
with $b(\partial I \times I)$. For simplicity we denote $b(I\times I)$ by $b$. 
We say $L_b$ is obtained from $L$ by a {\em band surgery} along $b$.
  Note that band surgery is equivalent to rational tangle surgery where $P = 0$ and $R = \frac{1}{w}$ 
for some integer $w$.  If $L$ and $L_b$ have orientations which agree with each other except for the band $b$, 
the corresponding band surgery is said to be {\em coherent}. 
A $(0,\frac{1}{w})$ move can be considered as a band surgery.
See Figure \ref{br-surgery}.
Note that by choosing the orientations of $L$ and $L_b$, any band
surgery can be a coherent one with respect to those orientations.

\begin{figure}[htbp]
\begin{center}
%
\includegraphics[scale=0.6]{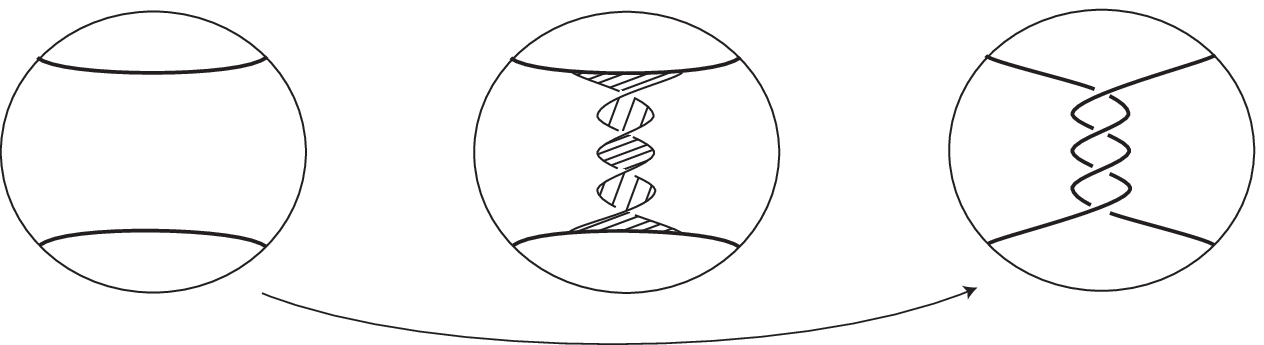}

$(0,-\frac{1}{4})$ move
\caption{A band surgery and a $(0,\frac{1}{w})$ move.}
\label{br-surgery}
\end{center}
\end{figure}

\section{Statement of Main Theorem:  Characterization of band surgery} \label{result}

In this section we state a result characterizing band surgeries
on a $(2,2k)$-torus link $N(2k)$ yielding a $2$-bridge knot $N(\frac{4mn-1}{2m})$.  

\begin{thm}\label{Thm:rts}
Suppose $N(U+0)=N(\frac{4mn-1}{2m})$ and $N(U+\frac{1}{w})= (2, 2k)$-torus link (*) 
and the rational tangle surgery corresponds to a 
coherent band surgery.
If the $(2, 2k)$-torus link has linking number $k$ where $|k| > 2$, then (*) has no solution. If the $(2, 2k)$-torus link has linking number $-k$, then 
one of the following holds $($see Figure \ref{4t} below$)$:
\begin{enumerate}
 \item[(1)] $k=m$ and $U=(\frac{4mn-1}{-w(4mn-1)+2m})$.
 \item[(2)] $k=n$ and $U=(\frac{4mn-1}{-w(4mn-1)+2n})$.
 \item[(3)] $k=m+n+1$ and $U=(\frac{-1}{2m+1}+\frac{-1}{2n+1})\circ(1,-(w+1),0)$.
 
 \rightline{ or $U=(\frac{-1}{2n+1}+\frac{-1}{2m+1})\circ(1,-(w+1),0)$.}
 \item[(4)] $k=m+n-1$ and $U=(\frac{-1}{2m-1}+\frac{-1}{2n-1})\circ(-1,-(w-1),0)$.
 
 \rightline{or $U=(\frac{-1}{2n-1}+\frac{-1}{2m-1})\circ(-1,-(w-1),0)$.}
\end{enumerate}
\end{thm}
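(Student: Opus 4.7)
The plan is to reduce the problem to the classification of integer Dehn surgeries between lens spaces via the double branched cover, and then to translate the resulting surgery data back to an explicit tangle expression for $U$. Recall that the double cover of $S^3$ branched over $N(a/b)$ is the lens space $L(a,b)$, so the cover branched over the $(2,2k)$-torus link $N(2k)$ is $L(2k,1)$ and the cover branched over $N(\frac{4mn-1}{2m})$ is $L(4mn-1,2m)$. A coherent band surgery lifts to an integer Dehn surgery along a knot $\widetilde{K}$ in the double cover, with framing inherited from the slope $\frac{1}{w}$. The theorem thus becomes: determine which knots $\widetilde{K}\subset L(2k,1)$ admit an integer surgery yielding $L(4mn-1,2m)$, and for each such knot compute the image tangle $U$ under the covering involution.

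The next step is to extract the orientation data. Coherent band surgery from a knot produces a two-component link with a canonical orientation, and the induced linking number is pinned down by the change in the linking form on $H_1$ of the double cover. This calculation should force the linking number to equal $-k$ rather than $+k$, and should exhibit the obstruction in the $|k|>2$ case: combined with the divisibility $4mn-1\equiv 0\pmod{2k}$ coming from the homology of the lens space, the coherence condition confines $k$ to one of $m$, $n$, $m+n+1$, or $m+n-1$.

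For the surgery classification I would exploit the known structure of lens-space surgery knots---either by direct continued-fraction analysis using the simplicity of the ambient space $L(2k,1)$, or by Heegaard Floer input---to place $\widetilde{K}$ on a genus-one Heegaard surface of $L(2k,1)$. The possible slopes relative to the meridians of the two solid tori fall into finitely many families, and after matching with the target $L(4mn-1,2m)$ only the four families in the theorem survive. To recover $U$ I use that the covering involution restricts on the Heegaard torus to a hyperelliptic involution whose quotient is a four-punctured sphere---a Conway sphere for the original link. Reading off the two arcs on the $U$-side of this Conway sphere together with the framing data yields the continued-fraction or Montesinos form listed in each case; the two alternative expressions appearing in (3) and (4) correspond to the two choices of which rational summand $\frac{-1}{2m\pm 1}$ or $\frac{-1}{2n\pm 1}$ sits on which side of the tangle sum. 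Theorems \ref{t-equival} and \ref{t-ratequiv} are then applied to show the listed forms exhaust all solutions up to rational tangle surgery equivalence.

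The main obstacle I anticipate is the third step: proving that no families beyond those listed occur. This is the sharpness direction, requiring either nontrivial Heegaard Floer input restricting which knots in $L(2k,1)$ admit a lens-space integer surgery, or a careful continued-fraction analysis of tangle decompositions compatible with both $N(U+0)=N(\frac{4mn-1}{2m})$ and $N(U+\frac{1}{w})=N(2k)$. Verifying that each listed $U$ actually satisfies both tangle equations is a routine continued-fraction check; the converse exhaustion, together with the matching of the four algebraic solutions to the four rational/Montesinos normal forms written in the statement, is where the substantive work lies.
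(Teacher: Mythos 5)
Your strategy---lifting the coherent banding to an integral Dehn surgery in the double branched cover and classifying surgeries taking $L(2k,1)$ to $L(4mn-1,2m)$---is a legitimate alternative framework, but as written it has two genuine gaps. First, the homological constraint you invoke, ``$4mn-1\equiv 0\pmod{2k}$,'' is false: $4mn-1$ is odd, so $2k$ never divides it for $k\neq 0$, and none of the four cases $k=m$, $n$, $m+n\pm1$ satisfy such a congruence (e.g.\ the trefoil-to-Hopf-link banding has $4mn-1=3$ and $2k=2$). The correct homological relation between the two covers involves the order and self-linking of the class of $\widetilde{K}$ in $H_1(L(2k,1))$, not divisibility of the two group orders, so this step cannot ``confine $k$'' as claimed. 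Second, and more seriously, the exhaustion step you yourself flag as the main obstacle---that every knot in $L(2k,1)$ admitting a lens-space integral surgery lies on a genus-one Heegaard surface---is a Berge-type statement that is not available in the literature; neither the ``known structure of lens-space surgery knots'' nor current Heegaard Floer technology delivers this classification for knots in lens spaces. Since that is exactly the sharpness direction carrying all the content of the theorem, the proposal does not constitute a proof.

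For contrast, the paper never passes to the cover. It works downstairs in $S^3$: Murasugi's signature bound (Theorem \ref{mu2}) kills the linking-number-$k$ case for $|k|>2$; the Euler-characteristic criterion of Theorem \ref{hs} forces the band into a minimal genus (genus-one) Seifert surface of $N(\frac{4mn-1}{2m})$; the classification of genus-one Seifert surfaces of genus-one $2$-bridge knots says there are only two such surfaces, each a plumbing of two twisted annuli; bands in such a surface correspond to the complementary core curve $\gamma_b$ with coordinates $(p,q)$, and $L_b=N(2k)$ exactly when $\gamma_b$ is unknotted; finally Motegi's twisting theorem (Theorem \ref{mo}), supplemented by positive/negative braid arguments in the residual cases $|m|=|n|=1$, pins $(p,q)$ down to the finitely many values giving the four families of bands, which are then converted to the tangle expressions for $U$. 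If you wish to salvage the covering-space route you would need to replace the appeal to lens-space surgery structure with an argument of comparable strength; the downstairs Seifert-surface argument is the paper's way of sidestepping precisely that open difficulty.
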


 Note if the $(2, \pm4)$-torus link has linking number $\pm 2$ where the $\pm$ signs 
agree, then we do not know if (*) has a solution corresponding to a coherent 
band surgery. See subsection \ref{Sec:4cat}  
for a case where the $(2, 4)$-torus link has linking number $2$ and the product is the trefoil knot. 
However, if the $(2, \pm4)$-torus link has linking number $\mp 2$, then we 
can use Theorem \ref{Thm:rts} to determine all solutions (if any) to (*). 
 The Hopf 
link = the $(2, 2)$-torus link = the $(2, -2)$-torus link.  Thus Theorem \ref{Thm:rts} 
applies to the Hopf link no matter how it is oriented.  
When $k = 0$, $N(2k) = N(0) =$ unlink of two components.  Hence Theorem \ref{Thm:rts} also applies to the two component unlink.
 
 The case where $mn = 0$ (i.e. $N(\frac{4mn-1}{2m})$ is an unknot) was proved in \cite{HS}.  In section \ref{proof}, we prove Theorem \ref{Thm:rts} for the case where  $m, n \not= 0$.

\begin{figure}[htbp]
\begin{center}
\includegraphics[scale=0.8]{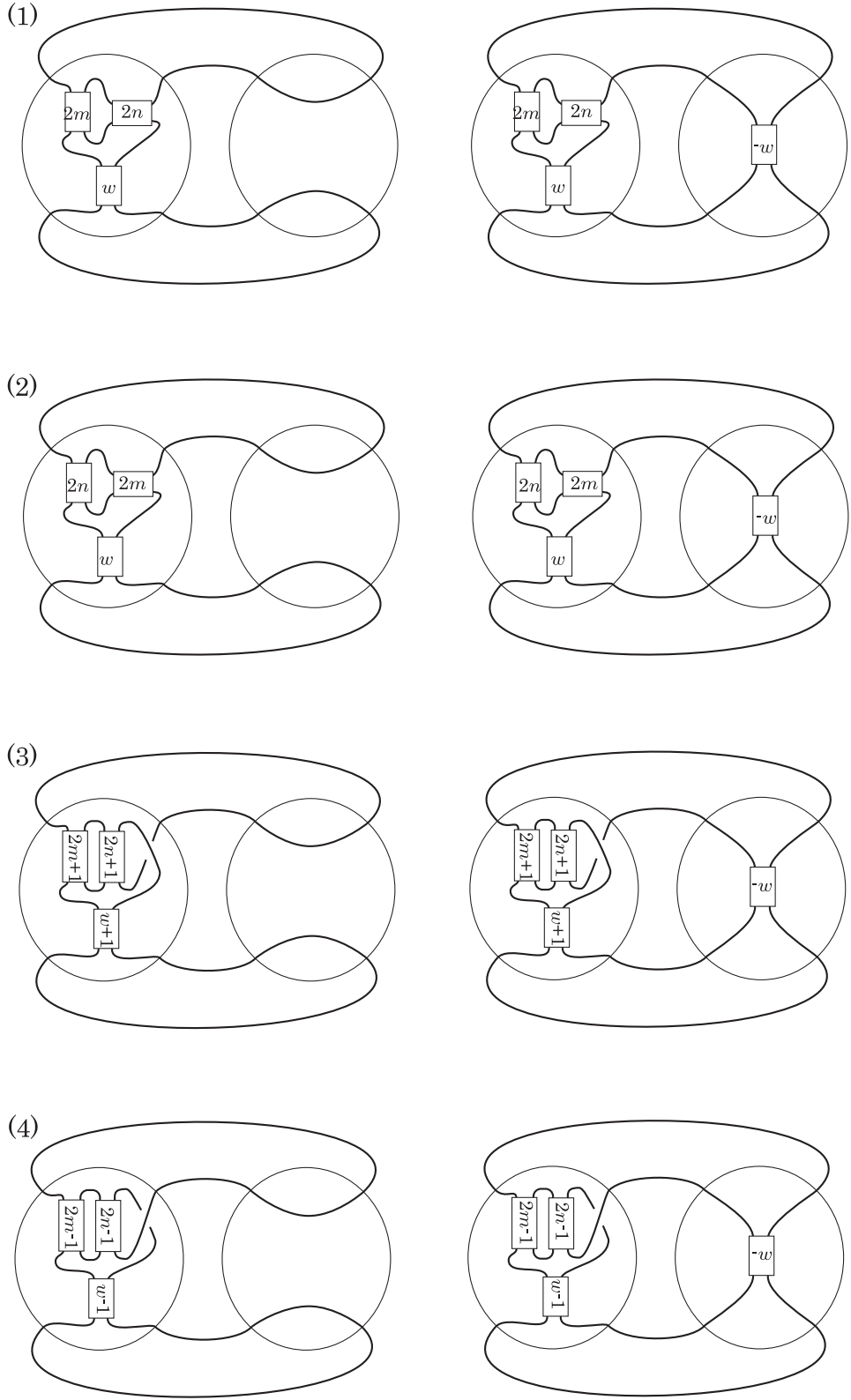}
\caption{Rational tangle surgeries from $N(\frac{4mn-1}{2m})$ to $N(2k)$.  A positive (negative) number inside a box corresponds to right (left) handed twists.  For example, $N(\frac{4mn - 1}{2m}) = (-2m, 2n, -w, 0)$ per left side of (1).}
\label{4t}
\end{center}
\end{figure}

\section{Results on Xer recombination on DNA catenanes}\label{xer}

Since recombinases normally bind to asymmetric recombination sites, the DNA sequence of the recombination sites 
can be used to orient these sites as shown in Figure \ref{directinverse}.   When the two sites are on the same 
component, their orientations on the circular DNA molecule can either agree (Figure \ref{directinverse}A) or 
disagree (Figure \ref{directinverse}B).  In the former case, we say that the two sites are directly repeated, 
while in the later case, we say that the two sites are inversely repeated. Observe that recombination on 
directly repeated sites normally results in a change in the number of components while for inversely repeated sites the number of components is normally preserved.

\begin{figure}[htbp]
\begin{center}
\includegraphics[width=13cm]{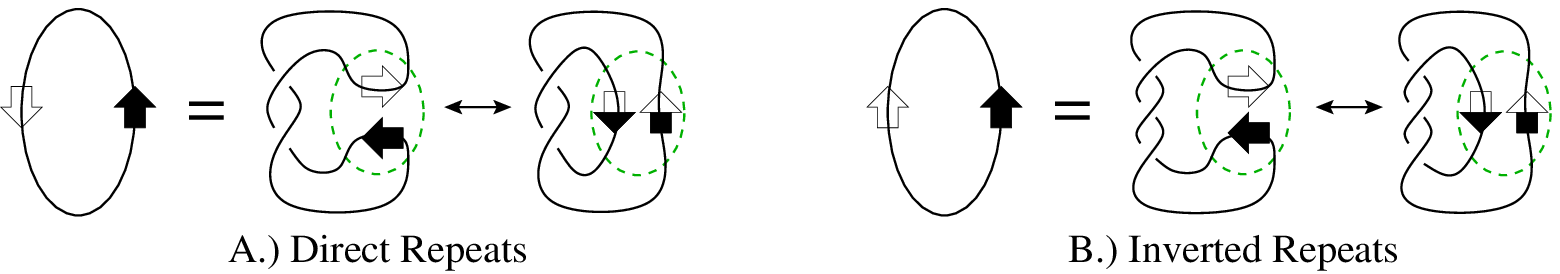}
\caption{A.) Recombination on directly repeated sites. B.) Recombination on inversely repeated sites.}
\label{directinverse}
\end{center}
\end{figure}

 When a site-specific recombinase binds to two segments of circular DNA, the DNA can be partitioned in 
different ways.  Sometimes one partitions the DNA into two tangles where one tangle represents the DNA bound by 
the protein while the other tangle represents the free DNA not bound by protein.  
  In other cases, we are interested in the local action of the DNA.  In this case, one tangle represents the two 
very short segments of DNA upon which the recombinase acts, breaking, exchanging and rejoining the DNA 
segments.  This local recombination reaction is modeled by replacing the tangle $P$ with the tangle $R$ (Figure 
\ref{rts}).    
    By taking a very small tangle ball around the DNA segments which are broken by the recombinase, we can assume 
the tangle $P$ does not contain any crossings.  Thus we can take the $P$ to be the $0$ tangle. 
    The remaining DNA configuration represented by the tangle $U$ is unchanged by the recombinase action (Figure \ref{rts}).  

Currently the local action of various recombinases have been modeled by $(0, \frac{1}{w})$ moves where $|w| = 
0, 1, 2$
(e.g. \cite{CSC, HGS, SECS}).  
Thus the local 
action of a recombinase is believed to be equivalent to a band surgery.  Moreover in some cases, the band surgery can be 
assumed to be coherent.
   When the sites are inversely repeated, recombination normally results in the inversion of one DNA segment 
with respect to the other one.  Thus this case does not correspond to a coherent band surgery.  However, when the 
sites are directly repeated or on different components, we can use these sites to orient the DNA.  In this 
case, the chemistry of the reaction normally requires that the recombination  correspond to a coherent band surgery.

\subsection{Xer recombination}
 When acting on circular DNA, recombinases can invert a DNA segment, delete a DNA segment or fuse together two 
DNA circles (Figure \ref{directinverse}). 
When two identical DNA circles fuse together into one larger DNA circle, the larger DNA circle contains two 
copies of the same DNA sequence (one from each of the identical circles).  The larger DNA circle made up of two  
copies of the same DNA sequence is called a dimer (di = two, mer = part). 
 This can occur to the genome of the bacteria E-coli.  When two circular genomes of E-coli fuse to form one 
larger circle containing two copies of the E-coli genome, the genome of the E-coli may not properly segregate 
when the host cell divides to form two new daughter cells.  Thus for proper segregation of its genome upon cell 
division, E-coli needs a method to change the dimer into two monomers.  This is the job of Xer 
recombinase.

The biochemistry of Xer recombination is very similar to recombinases such as Flp and Cre.  However its 
topology is very different.  Cre and Flp can produce a spectrum of knots and links when acting on circular DNA.  
They can invert, delete, or fuse together DNA.  Xer on the other hand produces a unique product dependent only 
on the starting configuration of the DNA (usually unknotted, but not always as seen below). Xer's function is 
to create two DNA circles from a larger DNA circle.  In other words, its function is to performs deletions.  
Xer recombinase does not need to invert a DNA segment, and it should not accidently fuse two DNA circles into a 
dimer.  But how does a protein know when it is acting on two segments from the same or different molecules or 
whether it is deleting or inverting a DNA segment?  Xer uses a topological filter to ensure that it only 
performs deletions when acting on unknotted circular DNA \cite{CBS}.
The term {\em topological filter} is used by  biologists to describe the mechanism in which a protein such as 
Xer  sets up a specific protein-DNA topology in order to select a particular reaction pathway -- in this case 
deletion instead of fusion or inversion.

When acting on unknotted circular DNA, Xer produces the $(2, 4)$-torus link with linking number $-2$ \cite{CBS}.  It is 
believed that Xer  uses the topological mechanism shown in Figure \ref{xer4cat}. The local action of Xer 
recombination is modeled by the small dashed circles: the $P = 0$ tangle is changed into the $R = -1$ tangle.  
The larger black circles in Figure \ref{xer4cat} denote the tangles modeling the entire protein-DNA complex. We will use $B$ and $E$ to represent these larger tangles  
modeling the protein-bound DNA before and 
after recombination, respectively.
Xer 
uses accessory proteins to trap three  DNA crossings. Thus this protein-DNA complex includes the Xer proteins, 
the accessory proteins and the three DNA crossings. It is modeled by the $B = -\frac{1}{3}$ tangle.  If 
the Xer binding sites are directly repeated, the three crossings  brings the Xer sites into proper conformation 
so that recombination can occur.  Thus the protein bound DNA configuration changes from the $B = -\frac{1}{3}$ 
tangle to the $E = -\frac{4}{3}$ tangle. 
We would need to change the orientation of one of the arrows in Figure \ref{xer4cat} if  the DNA sites were 
inversely repeated.  Xer cannot act on such a conformation and thus
inversion is prevented. 
The $B = 
-\frac{1}{3}$ tangle conformation also makes fusion of two unlinked DNA circles unlikely as it is biologically 
difficult to form the three crossings in the $B = -\frac{1}{3}$ tangle if the DNA comes from two unlinked circular DNA 
molecules.  
Thus deletion is normally preferred over both inversion and fusion.

\begin{figure}[htbp]
\begin{center}
\includegraphics[width=8cm]{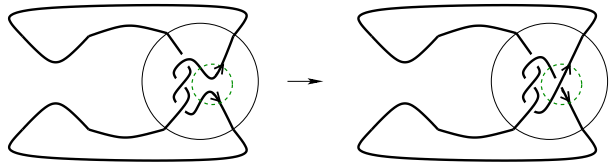}
\caption{Xer recombination on the unknot results in the $(2, 4)$-torus link}
\label{xer4cat}
\end{center}
\end{figure}

\subsection{Xer recombination from $(2,2k)$-torus link to $2k+1$
  crossing knot}
In \cite{BSC} Bath, Sherratt and Colloms studied 
Xer site-specific recombination on DNA catenanes whose 
link types were $(2,2k)$-torus links, $(k\geq 2)$.  Xer binding sites were placed on each component of these links, and these sites were used to orient the  $(2,2k)$-torus link substrates.  If these   $(2,2k)$-torus links had linking number $-k$ and if $k \geq 3$, then Xer  recombination can occur relatively efficiently 
and yields DNA knots with 
$2k+1$ crossings. That is, Xer recombination can result in the fusion of two DNA molecules.  
For example, Figure \ref{7_4} shows possible configurations of DNA before and after recombination in the case 
where $k=3$. Xer did not act efficiently when the $(2,2k)$-torus links had linking number $k$, nor did it act efficiently on $N(4)$ regardless of the orientation of the Xer binding sites.
  Recall that the topological filter used by Xer is suppose to prevent fusion from occurring.  Thus in order to  understand why Xer can perform fusion on some substrates but not others, we need to understand the topology of these reactions.  According to Theorem \ref{Thm:rts},  assuming Xer's local action corresponds to a coherent band surgery, it is not mathematically possible for Xer to act on a $(2,2k)$-torus links with  linking number $k \geq 3$  and produce the knot $N(\frac{4mn-1}{2m})$.  This corresponds nicely with
  Xer's inability to act when the $(2,2k)$-torus links had linking number $k$, $k \geq 2$. Theorem \ref{rts} only applies when the product is $N(\frac{4mn-1}{2m})$ and does not give us any information about other types of knots, but it is a good first step. 
That Xer can act on $(2,2k)$-torus links with linking number $-k$ when $k \geq 3$, but not on the $(2, 4)$ torus link is more intriguing.





To better understand Xer recombination, we will first focus on its local action. 
By Theorem \ref{t-equival}, a $(0,\frac{1}{w})$-move and $(0,\frac{1}{w'})$-move
are equivalent to each other for any integers $w$ and $w'$. 
 The local  action of Xer recombination has been modeled by replacing the $P = 0$-tangle with the $R = -1$-tangle.    
Thus we fix the integer $w$: $w=-1$ in Theorem \ref{Thm:Xer2k} and corollaries.
Recall that the chemistry of the reaction normally requires that the local action correspond to a coherent band surgery.

\begin{figure}[htbp]
\begin{center}
\includegraphics[scale=0.6]{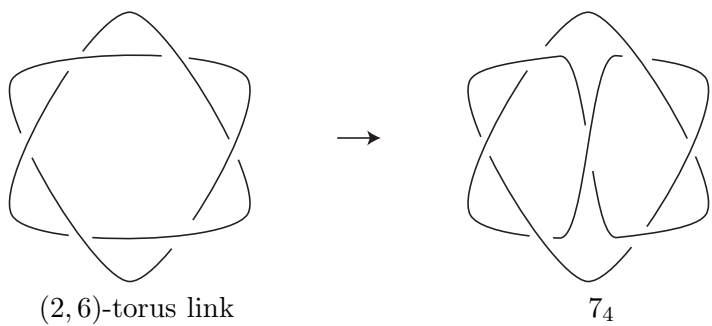}

\hspace{-10mm} $(2,6)$-torus link\hspace{47mm}$7_4$
\caption{Xer recombination from the $(2,6)$-torus link to the $7_4$ knot.}
\label{7_4}
\end{center}
\end{figure}



\begin{figure}[htbp]
\begin{center}
\includegraphics[scale=0.6]{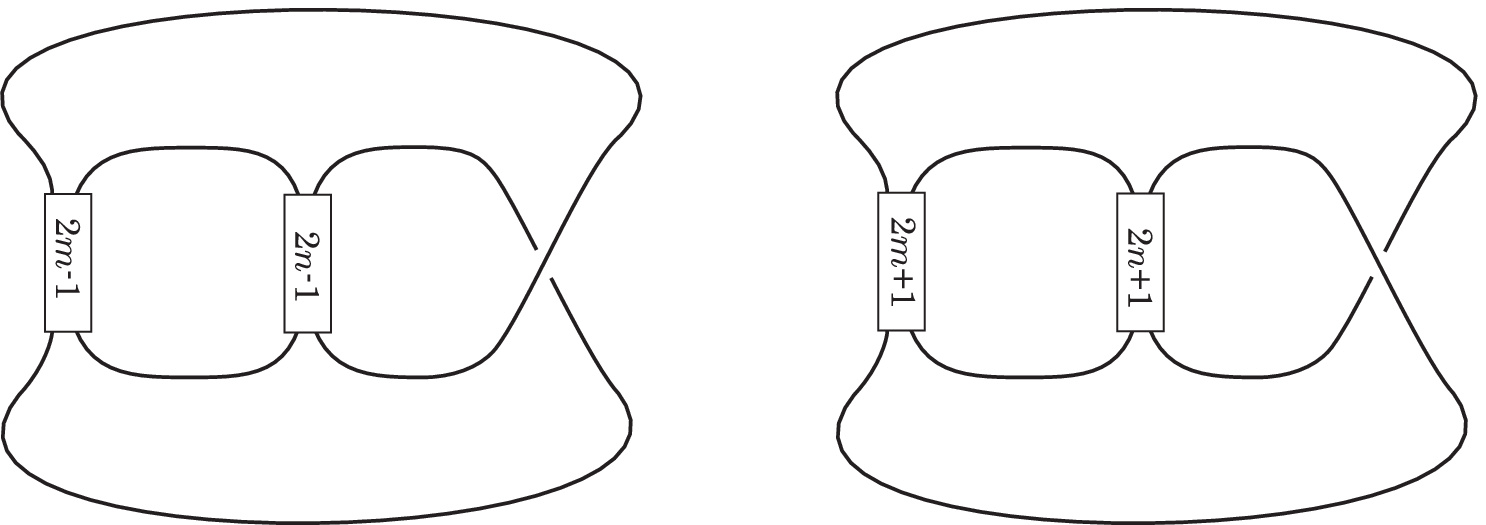}

$m,n>0$, $m+n=k-1$\hspace{5mm}$m,n<0$, $m+n=-(k-1)$
\caption{Knots with $2k+1$ crossings.}
\label{2k+1crossing}
\end{center}
\end{figure}

\begin{thm}\label{Thm:Xer2k}
Suppose $N(U+0)= 
N(2k)$ and $N(U+(-1))= N(\frac{4mn-1}{2m})$, where $k\ge 1$ and $N(\frac{4mn-1}{2m})$ has $2k+1$ crossings.
Then $mn>0$, $|m+n|=k + 1$, and $U=(\frac{-1}{2m-1}+\frac{-1}{2n-1})$ 
or $(\frac{-1}{2n-1}+\frac{-1}{2m-1})$ $($see Figure \ref{2kto2k+1}$)$.
\end{thm}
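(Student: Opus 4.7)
My plan is to translate the setup to apply Theorem \ref{Thm:rts}, use the crossing-number condition to force $mn > 0$ and $|m+n| = k+1$, and then read off $U$ from case (4) of that theorem.

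First, set $V := U + (-1)$, so that $V + 1 = U$ as tangles. Then $N(V+0) = N(U+(-1)) = N(\frac{4mn-1}{2m})$ and $N(V+1) = N(U+0) = N(2k)$, which is exactly the hypothesis of Theorem \ref{Thm:rts} with $w = 1$. Coherence of the band surgery is preserved under this trivial relabeling. For $|k| > 2$, Theorem \ref{Thm:rts} forces the $(2,2k)$-torus link to have linking number $-k$ (otherwise there is no solution), placing us in one of its four cases; for $k = 1$ the Hopf link works either way, and the borderline $k = 2$ case is resolved by the same subsequent case check.

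To pin down the signs and the relation $|m+n| = k+1$, I compute the crossing number of $N(\frac{4mn-1}{2m})$. For $m,n > 0$, the Euclidean reduction $\frac{4mn-1}{2m} = (2n-1) + \frac{1}{1 + \frac{1}{2m-1}}$ is a canonical continued fraction $[2n-1,\, 1,\, 2m-1]$ with all positive entries, giving the odd crossing number $2m + 2n - 1$; the case $m, n < 0$ yields the mirror knot with the same odd crossing number $2|m| + 2|n| - 1$. If instead $mn < 0$, an analogous reduction produces the canonical fraction $[2|n|,\, 2|m|]$, with even crossing number $2(|m|+|n|)$, which cannot equal the odd value $2k + 1$. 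The case $mn = 0$ gives the unknot, also incompatible. Hence $mn > 0$, and $2(|m|+|n|) - 1 = 2k + 1$ yields $|m|+|n| = k+1$, which equals $|m+n|$ since $m$ and $n$ share a sign.

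Now I sift through the four cases of Theorem \ref{Thm:rts}. Case (3) requires $m + n = k - 1$, contradicting $|m+n| = k + 1$. For $m, n < 0$ each case forces $k \leq -1$, contradicting $k \geq 1$; so $m, n > 0$, consistent with the crossing-number argument. Cases (1) and (2), which need $n = 1$ or $m = 1$ respectively, coincide in those degenerate parameters with case (4) (a direct calculation of the rational-tangle fraction shows both yield the same $V$). Thus case (4) applies with $w = 1$, giving $V = (\frac{-1}{2m-1}+\frac{-1}{2n-1})\circ(-1,\, 0,\, 0)$ or with summands swapped. Thanks to the two zero entries, the circle product $\circ(-1, 0, 0)$ simplifies to appending a single $-1$ twist to the outside of the summed Montesinos tangle, so $V = (\frac{-1}{2m-1}+\frac{-1}{2n-1}) + (-1)$. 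Adding back the $+1$ twist gives $U = V + 1 = \frac{-1}{2m-1}+\frac{-1}{2n-1}$ (or its swap), matching the claim. The main delicate step is this final tangle simplification, which rests on the paper's specific circle-product convention; everything else is an elementary case-check or continued-fraction computation.
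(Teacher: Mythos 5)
Your argument is correct and follows essentially the same route as the paper's own proof: force $mn>0$ and $|m+n|=k+1$ from the crossing number of $N(\frac{4mn-1}{2m})$ (you use the canonical continued fraction $[2n-1,1,2m-1]$ where the paper uses the pretzel forms $P(2m\mp1,2n\mp1,\pm1)$ — both rest on reduced alternating diagrams), then relabel $V=U+(-1)$ so Theorem \ref{Thm:rts} applies with $w=1$, discard case (3), absorb cases (1) and (2) into case (4), and simplify $\circ(-1,0,0)$ to $+(-1)$. Your hand-wave at $k=2$ (where a $(2,4)$-torus link with linking number $+2$ falls outside Theorem \ref{Thm:rts}) is no worse than the paper, whose proof likewise restricts to $|m+n|=k+1\ge 4$.
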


\begin{proof}
First we determine the crossing number of the knot 
$N(\frac{4mn-1}{2m})$. $N(\frac{4mn-1}{2m}) = N(2n + \frac{1}{-2m})$. Thus if $mn<0$, the crossing 
number of the knot $N(\frac{4mn-1}{2m})$ is $2|m+n|$ since a reduced alternating diagram gives 
the crossing number of the corresponding knot. Hence $mn > 0$.  Note that 
$N(\frac{-1}{2m-1} + \frac{-1}{2n-1} + (-1)) = 
N(\frac{-1}{2m-1} + \frac{-2n}{2n-1}) = N(\frac{-(2n-1) - 2n(2m-1)}{-2m}) = N(\frac{4mn-1}{2m})$. 
 Also 
$N(\frac{-1}{2m+1} + \frac{-1}{2n+1} + 1) = N(\frac{-1}{2m+1} + \frac{2n}{2n+1}) = N(\frac{-(2n+1) + 2n(2m+1)}{2m}) = N(\frac{4mn-1}{2m})$. 
Thus the knot $N(\frac{4mn-1}{2m})$ is ambient isotopic to the pretzel knots, $P(2m-1, 2n-1, 1)$ and $P(2m+1, 2n+1, -1)$.  
Thus the crossing number of a knot $N(\frac{4mn-1}{2m})$ is $2|m+n|-1$ if $mn>0$. Hence the crossing number of 
$N(\frac{4mn-1}{2m})$ is $2k+1$ if and only if $mn>0$ and $|m+n|=k + 1$, see Figure \ref{2k+1crossing}.

Applying Theorem \ref{Thm:rts} for $N((U+(-1))+0)=N(U+(-1))=N(\frac{4mn-1}{2m})$ and $N((U+(-1))+1)=N(U+0)=N(2k)$ together with an assumption $mn>0, |m+n|=k+1\ge 4, w=1$, 
we obtain from case (4) $k=m+n-1$, that
$(U+(-1))=\begin{cases}
(\frac{-1}{2m-1}+\frac{-1}{2n-1})\circ(-1)=(\frac{-1}{2m-1}+\frac{-1}{2n-1})+(-1)&
 \mbox{or}\\
(\frac{-1}{2n-1}+\frac{-1}{2m-1})\circ(-1)=(\frac{-1}{2n-1}+\frac{-1}{2m-1})+(-1)&
\end{cases}$, 
and so 
$U=
\begin{cases}
(\frac{-1}{2m-1}+\frac{-1}{2n-1})& \mbox{or}\\
(\frac{-1}{2n-1}+\frac{-1}{2m-1})&
\end{cases}$, 
see Figure \ref{2kto2k+1}.  When $mn>0$ and $|m+n|=k + 1$, case (3) does not occur and cases (1) and (2) are included in case (4).
\end{proof}

\begin{figure}[htbp]
\begin{center}
\includegraphics[scale=0.6]{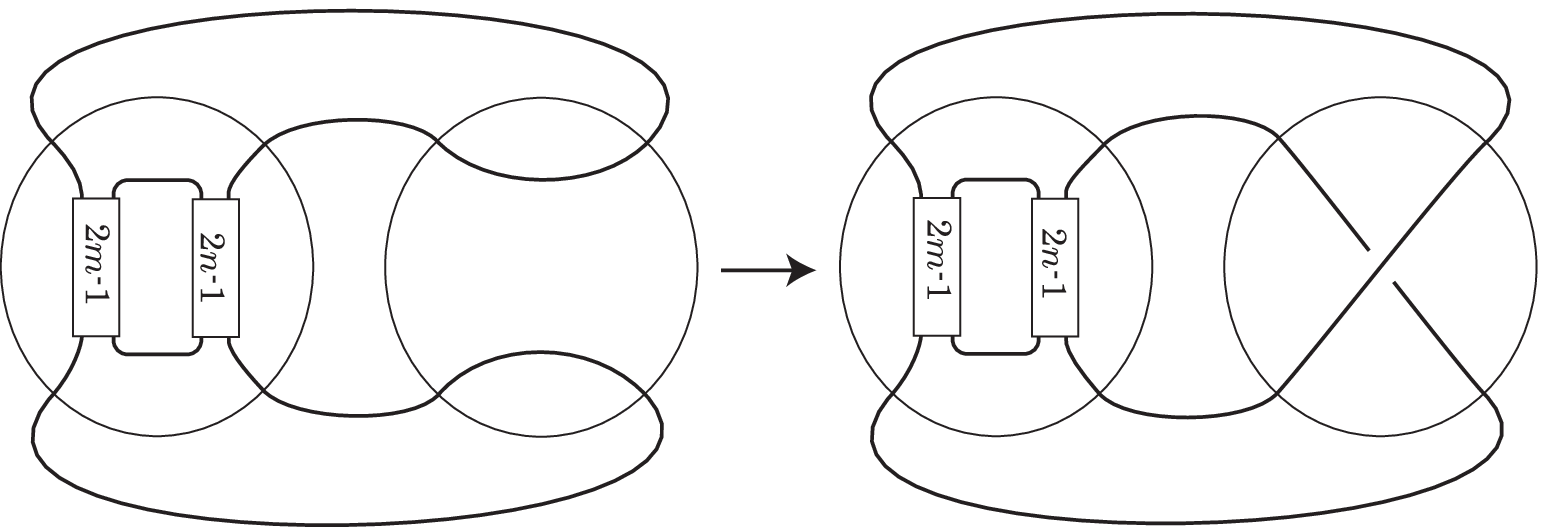}

\hspace{13mm}$N(2k)$\hspace{28mm}$P(2m-1,2n-1,1)$
\caption{Xer recombination from $N(2k)$ to $P(2m-1,2n-1,1)$}
\label{2kto2k+1}
\end{center}
\end{figure}

By Theorem \ref{t-equival}, a $(0,-1)$-move is equivalent to a $(0,\frac{1}{w})$-move,
because $\frac{1}{w} = \frac{1}{-1 - h}$ for an integer $h$ ($h = -w - 1$). Then we obtain Corollary \ref{Cor:Xer2k} below from Theorem \ref{Thm:Xer2k}.

\begin{cor}\label{Cor:Xer2k}
Suppose $N(U+0)= N({2k}) = T_{2,2k}$ and $N(U+\frac{1}{w})= N(\frac{4mn-1}{2m})$, where $k\ge 1$ and $N(\frac{4mn-1}{2m})$ has $2k+1$ crossings.
Then $mn>0$, $|m+n|=k+1$, and $U=(\frac{-1}{2m-1}+\frac{-1}{2n-1}) \circ(-w-1, 0)$ or $(\frac{-1}{2n-1}+\frac{-1}{2m-1}) \circ(-w-1, 0)$. 
\end{cor}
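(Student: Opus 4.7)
The plan is to reduce to the $(0,-1)$-move case, which is already handled by Theorem \ref{Thm:Xer2k}, via the equivalence of rational tangle surgeries supplied by Theorem \ref{t-equival}. The paragraph preceding the corollary in fact flags the key observation: the $(0,-1)$-move and the $(0,\frac{1}{w})$-move are equivalent under the choice of parameter $h=-w-1$ in Theorem \ref{t-equival}.

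More precisely, I would set $\frac{t}{w}=\frac{-1}{1}$ in Theorem \ref{t-equival} and seek $h$ so that $\frac{t}{w-ht}=\frac{-1}{1+h}$ equals $\frac{1}{w}$ (using the corollary's $w$); this forces $h=-w-1$. The "moreover" half of Theorem \ref{t-equival} then gives the biconditional $N(U_0+0)=N(2k)$ and $N(U_0+(-1))=N(\frac{4mn-1}{2m})$ if and only if $N([U_0\circ(-w-1,0)]+0)=N(2k)$ and $N([U_0\circ(-w-1,0)]+\frac{1}{w})=N(\frac{4mn-1}{2m})$. Because the circle product with $(-w-1,0)$ is invertible as an operation on tangles, this yields a bijection between the two solution sets, via $U=U_0\circ(-w-1,0)$.

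Next I would invoke Theorem \ref{Thm:Xer2k} directly: under its hypotheses, the crossing-number assumption forces $mn>0$ and $|m+n|=k+1$, and the only possibilities are $U_0=\frac{-1}{2m-1}+\frac{-1}{2n-1}$ or $U_0=\frac{-1}{2n-1}+\frac{-1}{2m-1}$. Substituting each option into $U=U_0\circ(-w-1,0)$ produces exactly the two tangle expressions for $U$ claimed in the corollary, while the constraints on $m$, $n$, $k$ are inherited directly from Theorem \ref{Thm:Xer2k}.

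No substantive obstacle arises; the content lies entirely in Theorem \ref{Thm:Xer2k}, and this corollary is a bookkeeping translation between equivalent rational tangle surgery problems. The only care needed is to pin down the sign and direction of $h$, which is unambiguous once one solves $\frac{-1}{1+h}=\frac{1}{w}$, and to verify that the correspondence $U_0\mapsto U_0\circ(-w-1,0)$ is a bijection on tangles, a fact built into Theorem \ref{t-equival}.
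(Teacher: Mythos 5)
Your proposal is correct and follows essentially the same route as the paper: the paper derives Corollary \ref{Cor:Xer2k} from Theorem \ref{Thm:Xer2k} precisely by the equivalence of the $(0,-1)$ and $(0,\frac{1}{w})$ moves from Theorem \ref{t-equival} with $h=-w-1$, so that solutions transform by $U=U_0\circ(-w-1,0)$. Your extra remark that the circle product with $(-w-1,0)$ is invertible (ensuring the correspondence captures all solutions) is a sound, if implicit in the paper, piece of bookkeeping.
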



In the following corollaries, we apply Theorem \ref{Thm:Xer2k} to the cases involving
$(2,2k)$-torus links with linking number $-k$ where
$k=3,4,5$,



\begin{cor}\label{6cat}
Suppose $N(U+0)=N(6)$.
\begin{enumerate}
\item If $N(U+(-1))=7_2$ $(=N(\pm\frac{11}{2}))$, then $U=(-\frac{6}{5})$, and so $N(U+(-1))=N(\frac{11}{2})$.
\item If $N(U+(-1))=7_4$ $(=N(\pm\frac{15}{4}))$, then $U=(\frac{-1}{3}+\frac{-1}{3})$, and so $N(U+(-1))=N(\frac{15}{4})$.
\end{enumerate}
\end{cor}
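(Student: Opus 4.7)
The plan is to specialize Theorem \ref{Thm:Xer2k} to $k = 3$ in each subcase. Since $N(U+0) = N(6) = N(2 \cdot 3)$, we have $k = 3$, and both $7_2$ and $7_4$ have crossing number $7 = 2k+1$, so the hypotheses of Theorem \ref{Thm:Xer2k} are met. It delivers $mn > 0$, $|m+n| = 4$, and $U = \frac{-1}{2m-1} + \frac{-1}{2n-1}$ (up to transposing $m \leftrightarrow n$). Inspecting its proof, which invokes only case (4) of Theorem \ref{Thm:rts}, further pins down $m+n = k+1 = 4$ rather than $-4$.

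For subcase (1), I enumerate the integer pairs with $mn > 0$ and $m+n = 4$, namely $(1,3)$, $(3,1)$, and $(2,2)$, and compute $\frac{4mn-1}{2m} \in \{\frac{11}{2}, \frac{11}{6}, \frac{15}{4}\}$. Invoking the 2-bridge equivalence $N(p/q) = N(p/q')$ when $qq' \equiv 1 \pmod p$ (here $2 \cdot 6 \equiv 1 \pmod{11}$), the first two represent $7_2$ while the third is a different knot. Thus $(m,n) = (1,3)$ or $(3,1)$, so $\{2m-1, 2n-1\} = \{1, 5\}$. Since $\frac{-1}{1} = -1$ is an integer tangle, the sum $\frac{-1}{1} + \frac{-1}{5}$ collapses to the rational tangle of fraction $-\frac{6}{5}$, giving $U = -\frac{6}{5}$. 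A final fraction computation then confirms $N(U+(-1)) = N(-\frac{11}{5}) = N(\frac{11}{2})$ via $-5 \equiv 6 \pmod{11}$ and $6 \cdot 2 \equiv 1 \pmod{11}$.

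For subcase (2), the analogous enumeration shows $(m,n) = (2,2)$ is the unique pair producing $N(15/4) = 7_4$, so $U = \frac{-1}{3} + \frac{-1}{3}$. This is not a rational tangle, since neither summand is an integer tangle, but the identity $N(U + (-1)) = N(\frac{15}{4})$ is exactly the $m = n = 2$ specialization of the chain of equalities already established in the proof of Theorem \ref{Thm:Xer2k}.

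The whole computation is a direct specialization of Theorem \ref{Thm:Xer2k}, so the only mild obstacle is bookkeeping: (a) correctly identifying which fractions represent $7_2$ and $7_4$ under the 2-bridge equivalence $p/q \sim p/q'$ when $qq' \equiv 1 \pmod p$, and (b) recognizing that the tangle sum $\frac{-1}{2m-1} + \frac{-1}{2n-1}$ collapses to a single rational tangle precisely when one of $2m-1$, $2n-1$ equals $\pm 1$, which is why subcase (1) admits the single-fraction answer $U = -\frac{6}{5}$ while subcase (2) does not.
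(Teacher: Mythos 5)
Your proof is correct and follows exactly the route the paper intends: the paper gives no written proof of Corollary \ref{6cat}, stating only that it ``applies Theorem \ref{Thm:Xer2k}'' with $k=3$, and your argument fills in precisely that specialization, including the enumeration of $(m,n)\in\{(1,3),(3,1),(2,2)\}$, the $2$-bridge identification $N(\frac{11}{2})=N(\frac{11}{6})$, and the collapse of $\frac{-1}{1}+\frac{-1}{5}$ to the rational tangle $-\frac{6}{5}$. Your observation that the sign of $m+n$ is pinned down by case (4) of Theorem \ref{Thm:rts} (so that only the chirality $N(\frac{11}{2})$, resp.\ $N(\frac{15}{4})$, survives) is a correct and worthwhile detail that the paper leaves implicit.
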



\begin{cor}\label{8cat}
Suppose $N(U+0)=N(8)$.
\begin{enumerate}
\item If $N(U+(-1))=9_2$ $(=N(\pm\frac{15}{2}))$, then $U=(-\frac{8}{7})$, and so $N(U+(-1))=N(\frac{15}{2})$,.
\item If $N(U+(-1))=9_5$ $(=N(\pm\frac{23}{4}))$, then $U=(\frac{-1}{3}+\frac{-1}{5})$ or $U=(\frac{-1}{5}+\frac{-1}{3})$ 
and so $N(U+(-1))=N(\frac{23}{4})$.
\end{enumerate}
\end{cor}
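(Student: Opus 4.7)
My plan is to apply Theorem \ref{Thm:Xer2k} with $k=4$ and enumerate the resulting pairs $(m,n)$. That theorem immediately gives $mn>0$, $|m+n|=k+1=5$, and $U = \frac{-1}{2m-1}+\frac{-1}{2n-1}$ (or the opposite ordering). Inspection of the proof of Theorem \ref{Thm:Xer2k} shows it proceeds through case (4) of Theorem \ref{Thm:rts}, so $k=m+n-1$, and combined with $k=4>0$ and $mn>0$ this forces $m,n>0$. The only admissible pairs are therefore $(m,n)\in\{(1,4),(4,1),(2,3),(3,2)\}$.

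Next I would match pairs to products by computing the fraction $\frac{4mn-1}{2m}$. The pairs $(1,4)$ and $(4,1)$ give fractions $\frac{15}{2}$ and $\frac{15}{8}$, which represent the same $2$-bridge knot $9_2=N(\frac{15}{2})$ because $8\equiv 2^{-1}\pmod{15}$; this settles case (1). The pairs $(2,3)$ and $(3,2)$ give $\frac{23}{4}$ and $\frac{23}{6}$, and both represent $9_5=N(\frac{23}{4})$ since $6\equiv 4^{-1}\pmod{23}$; this settles case (2). The specific sign of the fraction (i.e.\ $N(\frac{15}{2})$ rather than $N(-\frac{15}{2})$, and similarly for $9_5$) is pinned down by the explicit identity $N(\frac{-1}{2m-1}+\frac{-1}{2n-1}+(-1)) = N(\frac{4mn-1}{2m})$ established within the proof of Theorem \ref{Thm:Xer2k}, which also rules out any contribution from negative $(m,n)$ pairs.

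Finally I simplify $U$. In case (1), one of the summands is the integer tangle $-1$, so the sum is again a rational tangle whose fraction is $-1+(-1/7)=-\frac{8}{7}$, independent of which ordering is chosen; hence both pairs and both orderings collapse to the single answer $U=-\frac{8}{7}$. In case (2), neither $\frac{-1}{3}$ nor $\frac{-1}{5}$ is an integer tangle, so the two orderings $\frac{-1}{3}+\frac{-1}{5}$ and $\frac{-1}{5}+\frac{-1}{3}$ are genuinely distinct (Montesinos-type) tangles and both must be recorded. The main subtlety worth watching, in my view, is exactly this collapse of orderings in (1) versus their persistence in (2); the remainder of the argument is routine substitution and checking of the rational-link equivalences modulo $15$ and $23$.
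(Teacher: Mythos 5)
Your proposal is correct and follows essentially the route the paper intends: the corollary is stated as a direct application of Theorem \ref{Thm:Xer2k} with $k=4$, and your enumeration of the pairs $(m,n)\in\{(1,4),(4,1),(2,3),(3,2)\}$, the identifications $N(\frac{15}{8})=N(\frac{15}{2})$ and $N(\frac{23}{6})=N(\frac{23}{4})$, and the collapse of $\frac{-1}{1}+\frac{-1}{7}$ to the rational tangle $-\frac{8}{7}$ are exactly the computations needed. Your observation that the orderings collapse in case (1) but not in case (2), and that positivity of $m,n$ comes from case (4) of Theorem \ref{Thm:rts} (pinning down the chirality $N(\frac{15}{2})$ rather than $N(-\frac{15}{2})$), correctly handles the only subtleties.
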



\begin{cor}\label{10cat}
Suppose $N(U+0)=N(10)$.
\begin{enumerate}
\item If $N(U+(-1))=11a247$ $(=N(\pm\frac{19}{2}))$, then $U=(-\frac{10}{9})$, and so $N(U+(-1))=N(\frac{19}{2})$.
\item If $N(U+(-1))=11a343$ $(=N(\pm\frac{31}{4}))$, then $U=(\frac{-1}{3}+\frac{-1}{7})$ or $U=(\frac{-1}{7}+\frac{-1}{3})$, and so $N(U+(-1))=N(\frac{31}{4})$).
\item If $N(U+(-1))=11a363$ $(=N(\pm\frac{35}{6}))$, then $U=(\frac{-1}{5}+\frac{-1}{5})$, and so $N(U+(-1))=N(\frac{35}{6})$.
\end{enumerate}
\end{cor}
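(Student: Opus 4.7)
My plan is to apply Theorem \ref{Thm:Xer2k} with $k=5$. Each of the three target knots $11a247$, $11a343$, $11a363$ has $2k+1=11$ crossings, and the hypothesis $N(U+0)=N(10)$ supplies the $(2,2k)$-torus link with $k=5$, so the theorem applies and yields $mn>0$, $|m+n|=k+1=6$, together with
\[
U=\Bigl(\tfrac{-1}{2m-1}+\tfrac{-1}{2n-1}\Bigr)\quad\text{or}\quad\Bigl(\tfrac{-1}{2n-1}+\tfrac{-1}{2m-1}\Bigr).
\]

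Next I would enumerate the integer pairs $(m,n)$ satisfying $mn>0$ and $|m+n|=6$. Up to ordering and a common sign change, the unordered pair $\{|m|,|n|\}$ must be one of $\{1,5\}$, $\{2,4\}$, $\{3,3\}$, and the corresponding values of $4mn-1$ are $19$, $31$, $35$. Since the three target knots are $N(\pm\tfrac{19}{2})$, $N(\pm\tfrac{31}{4})$, $N(\pm\tfrac{35}{6})$, the three cases of the corollary match these three unordered pairs one to one. Substituting into the formula for $U$ then gives the stated tangles: in case (1) the summand $\tfrac{-1}{1}=-1$ is an integer tangle, so the sum $\bigl(\tfrac{-1}{1}+\tfrac{-1}{9}\bigr)$ simplifies to the rational tangle with fraction $-\tfrac{10}{9}$ and the two orderings coincide; in case (2) both orderings $\bigl(\tfrac{-1}{3}+\tfrac{-1}{7}\bigr)$ and $\bigl(\tfrac{-1}{7}+\tfrac{-1}{3}\bigr)$ are non-equivalent tangles and both appear in the statement; and in case (3) the two summands are equal, so the unique answer is $\bigl(\tfrac{-1}{5}+\tfrac{-1}{5}\bigr)$.

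To finish, I would verify the precise signed rational form claimed for $N(U+(-1))$ in each case. This is a routine continued-fraction calculation combined with the Schubert classification $N(\tfrac{a}{b})=N(\tfrac{a}{b'})$ whenever $b'\equiv b^{\pm1}\pmod{a}$; for instance in case (1), $U+(-1)=-\tfrac{10}{9}-1=-\tfrac{19}{9}$, and $N(-\tfrac{19}{9})=N(\tfrac{19}{10})=N(\tfrac{19}{2})$ because $10\cdot 2\equiv 1\pmod{19}$. The analogous checks for cases (2) and (3) are similar, so no genuine obstacle arises: Theorem \ref{Thm:Xer2k} does the heavy lifting and the remainder is bookkeeping of $(m,n)$ pairs and rational-tangle arithmetic.
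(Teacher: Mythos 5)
Your proposal is correct and follows exactly the route the paper intends: Corollary \ref{10cat} is stated as a direct instantiation of Theorem \ref{Thm:Xer2k} at $k=5$, and your enumeration of the pairs $(m,n)$ with $mn>0$, $|m+n|=6$ (giving $4mn-1=19,31,35$) together with the rational-tangle arithmetic for $U$ and the Schubert equivalence check is precisely the intended verification. The only detail worth making explicit is that the sign choice $m,n<0$ is eliminated because the resulting $U$ fails the hypothesis $N(U+0)=N(10)$, which pins down the chirality $N(U+(-1))=N(\frac{19}{2})$, etc., as claimed.
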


%

\subsection{Xer recombination from trefoil knot to Hopf link}
Our original description of Xer recombination was a simple one.  So far we have only described one type of Xer action.  The action of proteins depends on reaction conditions.  
For Xer, its action will be affected by both accessory proteins and the sequence to which it binds.  Xer is a site-specific recombinase (meaning it acts on specific DNA sequences).  But there are several different specific sequences upon which it can act.  Previously we described  the experiment in which $(2, 2k)$  torus links were converted into knots with $2k+1$ crossings.   This case involved the proteins XerC, XerD, and PepA acting on the DNA sequence {\it psi}.  In this case in order for Xer to act, these proteins trapped three DNA crossings (Figure  \ref{xer4cat}).

We will now describe a second set-up for  Xer recombination.  The proteins  XerC, XerD, and FtsK  can also act on the E. coli {\it dif} sequence.   
To distinguish this second type of action we will refer to this as XerCD-{\it dif}-FtsK recombination.  This system can resolve dimers.    It can also unlink DNA links in vitro  (i.e., in a test tube) \cite{Ip} and in vivo (i.e., in the cell) \cite{embo}.  
For  XerCD-{\it dif}-FtsK recombination, the topology of the protein-bound DNA is much simpler.  It is believed that FtsK is responsible for setting up a much simpler protein-bound DNA topology in which there is a projection where no DNA crossings are trapped.  For a movie of a proposed model, see supplementary data in  \cite{Ip}.

XerCD-{\it dif}-FtsK can unlink
the $(2,2k)$-torus link with linking number $\pm k$.  When the linking number is $-k$, the observed products were the unknot and the unlink of two componts and the proposed pathway is believed to be $(2,2k)$-torus link $\rightarrow$ unknot $\rightarrow$ unlink \cite{Ip}.  The related tangle equations are easily solved using \cite{HS, Darcy15072006}.  
When the linking number is $k$,  
a {stepwise unlinking model} was proposed 
\cite{Ip} in which each round of recombination reduces the  crossing number by one:  
the pathway is the $(2,2k)$-torus link, the $(2,2k-1)$-torus knot,
the $(2,2k-2)$-torus link, $\cdots$, the $(2,4)$-torus link, the trefoil
knot, the Hopf link, the trivial knot, the trivial link.
See \cite{SIV} for a mathematical study.
If we assume each recombination is modeled by a coherent band surgery,
the following corollary of Theorem 3.1 characterizes the band surgery between the trefoil knot and
the Hopf link.
The band surgery between the Hopf link and the trivial knot is
characterized by \cite{Thompson1989} and \cite{HS} and the band surgery between
the trivial knot and the trivial link is characterized by \cite{Sc}.

 \begin{cor}\label{Thm:Hopftrefoil}
 If $N(U+0)= N(3)$ $($trefoil knot$)$ and $N(U+\frac{1}{w})= N(2)$ $($Hopf link$)$,
  then $U=(\frac{3}{-3w-2})$.
 If $N(U+0)= N(3)$ and $N(U+(-1))= N(2)$,
  then $U=(3)$ $($see Figure \ref{Ht}$)$.
 \end{cor}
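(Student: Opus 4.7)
The plan is to apply Theorem~\ref{Thm:rts} directly with $K_1 = N(3)$ (the trefoil) and $K_2 = N(2)$ (the Hopf link), then verify that every applicable case collapses to a single value of $U$.

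First, I would pin down the parameters $(m,n,k)$. The fraction $\frac{4mn-1}{2m}$ is already in lowest terms since $\gcd(4mn-1,2m) = \gcd(-1,2m) = 1$, so $N\bigl(\frac{4mn-1}{2m}\bigr) = N(3)$ forces $|4mn-1| = 3$. Only $4mn-1 = 3$ has integer solutions, giving $mn = 1$ and thus $(m,n) \in \{(1,1),(-1,-1)\}$. The choice $(m,n) = (-1,-1)$ gives the fraction $3/(-2)$, and since $-2 \equiv 1 \pmod{3}$ the 2-bridge classification identifies $N(3/(-2))$ with $N(3/1) = N(3)$; the other choice yields the mirror trefoil $N(3/2)$ and is excluded. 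The Hopf link $N(2)$ is a $(2,2k)$-torus link with $|k|=1$, and the coherent-band-surgery hypothesis of Theorem~\ref{Thm:rts} together with the chirality chosen above forces $k=-1$ (so that the linking number equals $-k$).

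Next, I would sweep through the four cases of Theorem~\ref{Thm:rts} with $(m,n,k) = (-1,-1,-1)$. Case~(1), $k=m$, applies and yields $U = \frac{4mn-1}{-w(4mn-1)+2m} = \frac{3}{-3w-2}$; Case~(2), $k=n$, gives the same value by the $m\leftrightarrow n$ symmetry. Case~(3), $k=m+n+1=-1$, also applies and produces $U = \bigl(\frac{-1}{-1}+\frac{-1}{-1}\bigr) \circ (1,-(w+1),0) = 2 \circ (1,-(w+1),0)$; applying the twists $1$, $-(w+1)$, $0$ in sequence to the fraction $2$ according to the circle-product convention of Section~\ref{tangles} yields fraction $\frac{3}{-3w-2}$ as well. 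Case~(4), $k=m+n-1=-3$, is inconsistent and can be discarded.

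Hence $U = \frac{3}{-3w-2}$ is forced in every admissible case, proving the first claim. The second claim is simply the specialization $w=-1$: $U = \frac{3}{3-2} = (3)$. The main obstacle is the circle-product calculation of Case~(3), where one must carefully track the alternating horizontal/vertical twist conventions and their sign rules to confirm that $2 \circ (1,-(w+1),0)$ really coincides with the rational tangle $\frac{3}{-3w-2}$ produced by Case~(1); the chirality bookkeeping used to single out $(m,n,k)$ is a secondary subtlety handled cleanly once the 2-bridge equivalence $q \equiv q^{\pm 1} \pmod p$ is invoked.
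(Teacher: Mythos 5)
Your proposal is correct and follows essentially the same route as the paper: the paper's proof simply applies Theorem~\ref{Thm:rts} with $m=n=k=w=-1$ and observes that cases (1), (2), and (3) each yield $U=(3)$. You additionally carry out the general-$w$ computation and verify explicitly that the case-(3) circle product $2\circ(1,-(w+1),0)$ reduces to $\frac{3}{-3w-2}$, details the paper leaves implicit, but this is elaboration rather than a different argument.
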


 \begin{proof}
 We apply Theorem \ref{Thm:rts} by putting $m=n=k=w=-1$. 
 Then we obtained the solutions (1),(2) and (3). Each of them shows $U=(3)$.
 \end{proof}

\begin{figure}[htbp]
\begin{center}
\includegraphics[scale=0.6]{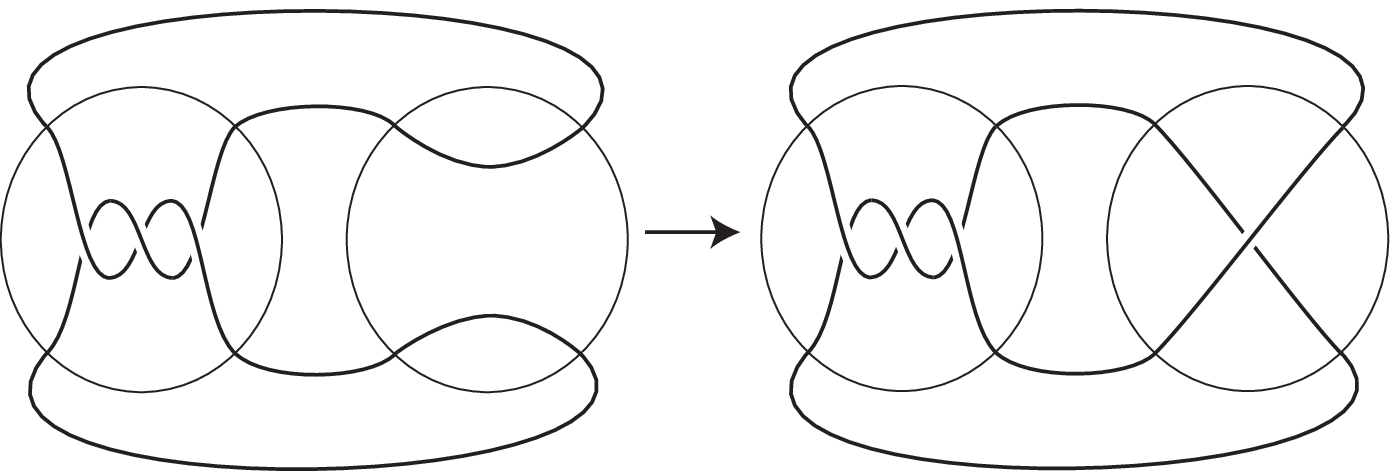}

trefoil knot\hspace{28mm} Hopf link
\caption{Xer recombination from the trefoil knot to the Hopf link.}
\label{Ht}
\end{center}
\end{figure}

\subsection{A comment on converting $N(4)$ into the trefoil knot}\label{Sec:4cat}
In the remark below, we look at one solution for converting $N(4)$ into $N(3)$.

\begin{rem}\label{Lem:4cat}
 If $U=(4)$, then $N(U+0)= N(4)$  where $N(4)$ has linking number +2 and $N(U+(-1))= N(3)$.
   \end{rem}
 
If $N(4)$ has linking number $+2$, we cannot apply Theorem \ref{Thm:rts}.
In this case the coherent band corresponding to the rational tangle surgery
can be isotoped to lie in the genus one Seifert surface of $N(4)$, but
not in the minimal Seifert surface of $N(3)$
(See Theorem \ref{hs}).


\section{Proof of Theorem \ref{Thm:rts}}\label{proof}

In this section we prove Theorem \ref{Thm:rts} when  $m, n \not= 0$.  Thus we wish to solve the system of equations $N(U+0)=N(\frac{4mn-1}{2m})$ and $N(U+\frac{1}{w})= (2, 2k)$-torus link.  
We will use that this rational tangle surgery corresponds to a band surgery. We assume the band surgery is coherent.
%
For a coherent band surgery, the relation between the signatures of $L$ and $L_b$ is known.

\begin{thm}{\rm (\cite[Lemma 7.1]{Mu2})}\label{mu2}
If $L_b$ is obtained from $L$ by a coherent band surgery,
then $|\sigma (L)-\sigma (L_b)|\le 1$.
\end{thm}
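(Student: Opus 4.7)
The plan is to build compatible Seifert surfaces for $L$ and $L_b$ that differ by a single band attachment, and then apply the Cauchy interlacing theorem for eigenvalues of real symmetric matrices.

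First I would take any Seifert surface $F$ for $L$, put $F$ and $b$ in transverse position, and eliminate the extra components of $F\cap b$ (circles in $\mathrm{int}(b)\cap\mathrm{int}(F)$ and arcs in $\mathrm{int}(b)$ with endpoints off $L$) by tubing $F$ along them. This may raise the genus of $F$ but keeps it a Seifert surface for $L$, and arranges that $F\cap b=\partial b\cap L$. Then $F':=F\cup b$ is a compact surface with $\partial F'=L_b$. The coherence hypothesis enters here: because the orientations of $L$ and $L_b$ agree off the band, $b$ admits an orientation compatible with that of $F$ along the two attaching arcs, so $F'$ inherits an orientation making it a Seifert surface for $L_b$.

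Next I would compare the Seifert forms. Since $F'$ is obtained from $F$ by attaching a single $1$-handle, $\chi(F')=\chi(F)-1$ and an easy count gives $\mathrm{rank}\,H_1(F')=\mathrm{rank}\,H_1(F)+1$, with the extra generator represented by a loop running once through $b$. Extending a basis of $H_1(F)$ by this loop puts the Seifert matrix $V'$ of $F'$ into the block form
\[
V'=\begin{pmatrix} V & w \\ x^T & y \end{pmatrix},
\]
where $V$ is the Seifert matrix of $F$. Symmetrizing yields
\[
V'+(V')^T=\begin{pmatrix} V+V^T & w+x \\ (w+x)^T & 2y \end{pmatrix},
\]
so the symmetric matrix associated to $L$ is a principal codimension-one submatrix of the one associated to $L_b$.

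Finally, the Cauchy interlacing theorem asserts that the eigenvalues of a real symmetric $k\times k$ matrix interlace those of any real symmetric $(k+1)\times(k+1)$ matrix containing it as a principal submatrix, forcing their signatures to differ by at most one. Applied to $V+V^T$ and $V'+(V')^T$, together with $\sigma(L)=\sigma(V+V^T)$ and $\sigma(L_b)=\sigma(V'+(V')^T)$, this gives $|\sigma(L)-\sigma(L_b)|\le 1$. The main technical obstacle is the first step: arranging $F$ to meet $b$ only along $\partial b\cap L$ while keeping $F\cup b$ orientable. Transversality and tubing dispose of the extraneous intersections, and coherence is precisely the hypothesis that makes the resulting $F\cup b$ orientable rather than only a (possibly non-orientable) spanning surface; without it one can in general only bound the Goeritz-style unoriented invariant, not the (oriented) signature.
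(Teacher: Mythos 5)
The paper offers no proof of this statement---it is quoted directly from \cite[Lemma 7.1]{Mu2}---and your argument is correct and is essentially the classical one found there: realize the coherent band surgery by attaching a $1$-handle to a Seifert surface of $L$ (after clearing $\mathrm{int}(b)$ of the surface by general position), note that the symmetrized Seifert matrix of $L$ then sits as a principal corank-one submatrix of that of $L_b$, and conclude by eigenvalue interlacing, using that the signature is independent of the chosen Seifert surface. The only quibble is the word ``tubing'': the standard moves for arranging $F\cap b=b(I\times\partial I)$ are compressions of $F$ along innermost disks of $b$ (for intersection circles) and isotopies across outermost disks (for arcs meeting the free edges of the band), but this does not affect the validity of your argument.
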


We consider the case where $L$ is a $2$-bridge knot $N(\frac{4mn-1}{2m})$
and $L_b$ is $N(2k)$. 
Suppose $N(2k)$ has linking number $k$ where $|k| > 2$. 
Then the absolute value of its signature is $2|k|-1$ 
while $|\sigma (N(\frac{4mn-1}{2m}))|=0$ or $2$.
Thus when $|k|>2$, the system of equations, $N(U+0)=N(\frac{4mn-1}{2m})$ and $N(U+\frac{1}{w})= N(2k)$ where $N(2k)$ has linking number $k$ has no solution when the rational tangle surgery corresponds to a 
coherent band surgery.
Since $N(2) = N(-2)$ as unoriented links, we can assume that the Hopf link, $N(2k)$ has linking number $-k$ where $k = \pm 1$.  Hence from here on, we will assume $N(2k)$ has linking number $-k$.


\begin{figure}[htbp]
\begin{center}
\includegraphics[scale=0.6]{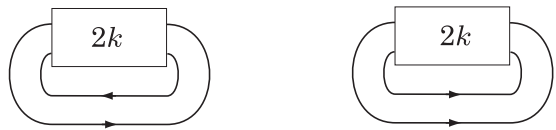}

\end{center}
\caption{
$(2,2k)$-torus links:  the left diagram has linking number $-k$ while the right diagram has linking number $k$.}
\label{ori}
\end{figure}

\begin{thm}[\cite{HS}]\label{hs}
Suppose that $b$ is a band of a coherent band surgery from $L$ to $L_b$. 
Then $\chi(L)\le \chi(L_b)-1$ if and only if $L$ has a minimal
genus Seifert surface containing $b$.
\end{thm}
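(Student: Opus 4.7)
The plan is to build Seifert surfaces for $L$ and $L_b$ directly from each other using the band $b$, and compare their Euler characteristics. Both directions of the biconditional rest on the same basic observation: attaching or removing a band from a Seifert surface changes its Euler characteristic by exactly one, with the sign depending on which direction you go.

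For the ``if'' direction, I would start with a minimum genus Seifert surface $F$ of $L$ that contains $b$ as a sub-rectangle with $b \cap \partial F = b(I \times \partial I)$. Let $F' = \overline{F \setminus b}$. Then $\partial F' = (L \setminus b(I \times \partial I)) \cup b(\partial I \times I) = L_b$, so $F'$ is a candidate Seifert surface for $L_b$. Coherence of the surgery ensures that the orientation of $F$ restricts to a compatible orientation on $F'$ inducing the correct orientation on $L_b$, so $F'$ is honestly a Seifert surface. An inclusion--exclusion computation gives $\chi(F') = \chi(F) - \chi(b) + \chi(F \cap \overline{F\setminus b}) = \chi(F) - 1 + 2 = \chi(F) + 1$, and combining with $\chi(F) = \chi(L)$ (minimum genus) yields $\chi(L_b) \ge \chi(F') = \chi(L) + 1$, i.e.\ $\chi(L) \le \chi(L_b) - 1$.

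For the ``only if'' direction, I would run the same construction in reverse. Choose a minimum genus Seifert surface $F_b$ for $L_b$ and attach the band $b$ externally, viewing $b$ as the band realizing the reverse surgery $L_b \to L$. The result $F = F_b \cup b$ has $\partial F = L$, is orientable by coherence, and satisfies $\chi(F) = \chi(F_b) + 1 - 2 = \chi(L_b) - 1$. The hypothesis $\chi(L) \le \chi(L_b) - 1$ then says $\chi(F) \ge \chi(L)$, which combined with the universal bound $\chi(F) \le \chi(L)$ forces equality. Hence $F$ is a minimum genus Seifert surface for $L$, and it contains $b$ by construction.

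The main obstacle I anticipate is carefully verifying the orientability claims: one must argue that coherence of the band surgery means $b$ can be oriented so that its attaching arcs induce the correct orientations on both $L$ and $L_b$, and hence that splicing $b$ into (or out of) an oriented Seifert surface yields another oriented surface. A secondary subtlety is handling the possibility that $\overline{F \setminus b}$ is disconnected when $b$ separates $F$; this does not affect the Euler characteristic count but requires that the relevant notion of $\chi(L_b)$ allow disconnected spanning surfaces, or else a small additional argument showing that a suitable connected minimum genus surface can be extracted. Once these points are settled, the remainder is routine Euler characteristic bookkeeping.
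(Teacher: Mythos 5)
The paper does not prove this statement; it is imported verbatim from the reference [HS], so there is no in-paper argument to compare against and your proposal has to stand on its own. Your ``if'' direction is fine: it is the standard band-removal computation, and your worry about $\overline{F\setminus b}$ being disconnected is harmless because the convention forced on $\chi(\cdot)$ here (for the statement to be true even for the two-component unlink) is that Seifert surfaces may be disconnected as long as they have no closed components.

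The ``only if'' direction, however, has a fatal gap at the step ``attach the band $b$ externally \dots\ the result $F = F_b\cup b$ has $\partial F = L$.'' The band $b$ is a \emph{fixed} embedded disk in $S^3$, and a maximal-$\chi$ Seifert surface $F_b$ of $L_b$ need not be disjoint from its interior: in general $\operatorname{int}(b)\cap F_b$ contains circles and, worse, arcs joining the points where the arcs $b(I\times\partial I)\subset L$ puncture $F_b$, so $F_b\cup b$ is not an embedded surface. This is not a technicality that isotopy repairs; there are coherent band surgeries satisfying the hypothesis for which \emph{no} maximal-$\chi$ Seifert surface of $L_b$ can be made disjoint from $\operatorname{int}(b)$. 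Take the reverse of the Xer surgery of Figure \ref{xer4cat}: $L = N(4)$ with linking number $-2$ (so $\chi(L)=-2$) and $L_b$ the unknot (so $\chi(L_b)=1$, and the hypothesis $-2\le 0$ holds). If the disk bounded by $L_b$ could absorb $b$ as you describe, you would obtain a Seifert surface for $L$ with $\chi = 1-1 = 0 > -2 = \chi(L)$, which is impossible. Note also that your construction never uses the hypothesis $\chi(L)\le\chi(L_b)-1$ to build the surface, only to certify minimality afterwards --- a sign that it proves too much, since run unconditionally it would force $|\chi(L)-\chi(L_b)|\le 1$ for every coherent band surgery, which the example above already violates. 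The genuine content of this direction is precisely the elimination of the intersections of $F_b$ (or of a suitably chosen surface) with the band, which is why [HS] needs real cut-and-paste input rather than Euler-characteristic bookkeeping; your listed obstacles (orientability, connectedness) are the minor ones.
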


In our case $\chi(L)=-1$ and $\chi(L_b)=0$ or $2$.
The link $L$ is a genus one $2$-bridge knot and genus one Seifert surfaces of $L$
are characterized in \cite{HT}; 
there are only two Seifert surfaces of $L$ up to equivalence as shown in Figure \ref{ss}.
Here two Seifert surfaces for an oriented link are said to be equivalent if there exists an ambient isotopy 
such that one Seifert surface is moved to the other by the isotopy
and the link is fixed as a set throughout the isotopy.
The surfaces $S_1$ and $S_2$ in Figure \ref{ss} are genus one Seifert surfaces of 
$L$ which are obtained by plumbing of two annuli with $m$ and $n$ full twists. 
Moreover, by \cite{K} it is known that $S_1$ is equivalent to $S_2$ if and only if $m=\pm 1$ or $n=\pm 1$.
Let $S$ be a minimal Seifert surface of $L$ which contains $b$ (i. e., $S = S_1$ or $S_2$).
The Seifert surface $S_1$ can be isotoped to $S_2$ via a $\pi$ rotation which moves $L$, but we will prove results for the stronger definition of equivalence given above.

\begin{figure}[htbp]
\begin{center}
\includegraphics[scale=0.6]{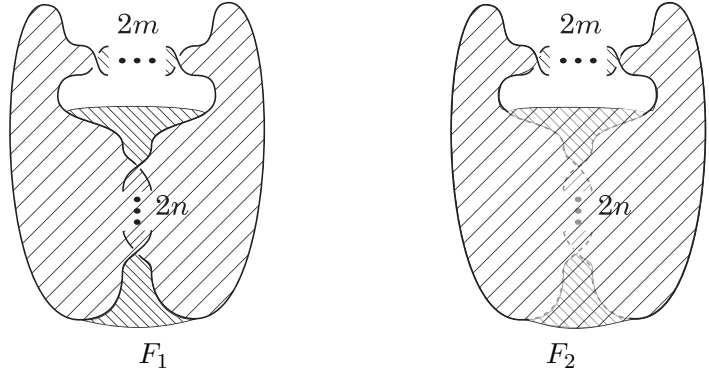}

$F_1$\hspace{5cm}$F_2$
\end{center}
\caption{Minimal genus Seifert surfaces for $N(\frac{4mn-1}{2m})$.}
\label{ss}
\end{figure}


We will give a parametrization of bands attached to $L$
which are contained in $S$.
If $b$ is contained in $S$ and if $S - b$ is connected, then Cl($S-b$) is an annulus in $S^3$.
Let $\gamma_b$ denote a core of the annulus.  
 Note that the boundary of this annulus is exactly $L_b$, so $L_b=N(2k)$ if and only if $\gamma_b$ is a trivial knot. 
A band $b$ determines a unique knot $\gamma_b$ (up to isotopy in $S$).  Conversely, given a knot in $S$, there exists a unique band in $S$ (up to isotopy in $S$) which is disjoint from that knot.
Thus there exists a one to one correspondence between ambient isotopy classes of  bands $b$ in $S$ and ambient isotopy classes of unoriented knots $\gamma_b$ in $S$.
In order to parametrize a band $b$ in $S$,
we will parametrize such 
a knot $\gamma_b$ in $S$.  
Recall that $S$ is a plumbing of two annuli with $m$ and $n$ full twists. 
Let $c_M$ (resp. $c_N$) be the oriented unknot shown in Figure \ref{para} which 
spans a disk that transversely meets 
the annulus with $m$ (resp. $n$) full twists in an arc. 
For parametrization, we give an orientation of $\gamma_b$.
Let $p=lk(\gamma_b,c_M)$ and $q=lk(\gamma_b,c_N)$. 
Then $\gamma_b$ is parametrized by $\pm (p,q)$.

\begin{figure}[htbp]
\begin{center}
\includegraphics[scale=0.6]{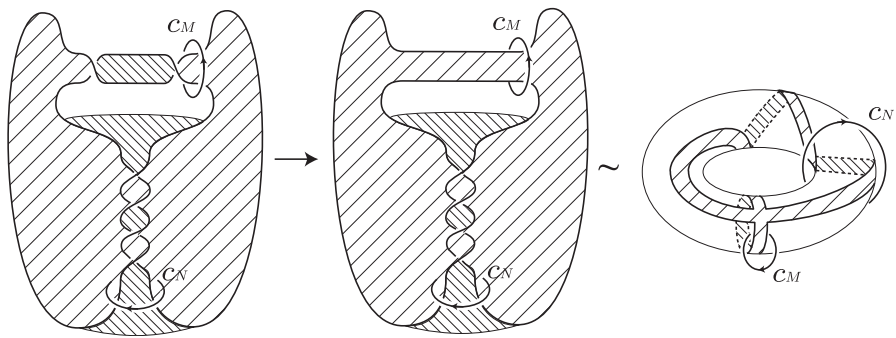}
\end{center}
\caption{$(-m)$-twist along $c_M$.}
\label{para}
\end{figure}

We consider a $(-m)$-twist along $c_M$. 
Then after the twist, $S$ lies on a standard torus in $S^3$ as shown in Figure \ref{para}.
Hence $\gamma_b$ becomes a torus knot after the twist.
Motegi \cite{Mo} characterized twists on an unknot which yield torus knots.

\begin{thm}{\rm (\cite[Theorem 3.8.]{Mo})}\label{mo}
Suppose a knot $K_{-m}$, which is obtained from a trivial knot $K$ by a $(-m)$-twist along a trivial knot $C$, is a torus knot.
Then, except for trivial examples, $m=\pm 1$.
\end{thm}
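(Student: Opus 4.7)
The plan is to recast the twisting operation as Dehn surgery and apply the machinery of exceptional fillings. A $(-m)$-twist on a knot $K$ along an unknot $C$ is equivalent to performing $(-1/m)$-Dehn surgery on $C$ in $S^3$; because $C$ is unknotted, the surgered manifold is again $S^3$, and the image of $K$ is $K_{-m}$. So the statement is equivalent to saying that if $(-1/m)$-surgery on the component $C$ of the two-component link $K\cup C$ (with both components unknotted) produces a torus knot from $K$, then $m=\pm 1$ unless the configuration is trivial.

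First I would make precise the class of trivial examples to be excluded. Natural trivial configurations are (a) $C$ bounds a disk disjoint from $K$, so the twist does nothing; (b) $C$ bounds a disk meeting $K$ transversely in a single point, where again $K_{-m}\cong K$; and (c) $K$ is a $(p,q)$-cable (including a core) of the solid torus $V=S^3\setminus\nu(C)$, in which case $K$ is already a torus knot for all $m$ and only the cabling slope changes under twisting. Having isolated these, I would assume from now on that $K$ sits in $V$ so that it is neither contained in a 3-ball nor a core/cable of $V$; equivalently, the wrapping number of $K$ in $V$ is at least $2$ and $K$ is not a cable.

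Next I would analyze the exterior $M=S^3\setminus\nu(K\cup C)$ using the geometrization/JSJ trichotomy. If $M$ is Seifert fibered or toroidal, the fact that both components are unknotted forces the link $K\cup C$ into one of a short list of explicit configurations (essentially cable, composite, or Hopf-type links obtained by gluing Seifert pieces), and in each such configuration one checks directly that either we land back in the trivial-examples list or the only twists producing a torus knot are $m=\pm 1$. If $M$ is hyperbolic, the requirement that $K_{-m}$ is a nontrivial torus knot means the Dehn filling of $M$ along the slope $-1/m$ on $\partial\nu(C)$ yields a Seifert fibered knot exterior inside $S^3$, so the slope $-1/m$ is an exceptional (non-hyperbolic) filling slope on $\partial\nu(C)$. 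The meridian $\infty$ of $C$ is also an exceptional slope since it returns the $S^3\setminus\nu(K)$ we started with (and $K$ is unknotted). By the cyclic surgery theorem of Culler–Gordon–Luecke–Shalen together with the Boyer–Zhang bound on the distance between exceptional slopes, these two slopes must satisfy $\Delta(-1/m,\infty)\le 1$, i.e.\ $|m|\le 1$.

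The main obstacle is the non-hyperbolic half of the case split: one has to run through the possible JSJ decompositions of the exterior of a two-component link of unknots carefully enough to verify that every Seifert or toroidal example either produces a torus knot only when $m=\pm 1$ or falls under the trivial-examples umbrella. Converting between twist conventions and surgery slopes (tracking how the preferred longitude of $C$ is framed by the Seifert surface before and after the twist) is a finicky but purely bookkeeping step, so the genuine mathematical difficulty lies entirely in the classification of those Seifert/toroidal configurations.
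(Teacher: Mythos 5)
First, a point of comparison: the paper does not prove this statement at all. Theorem \ref{mo} is quoted verbatim from Motegi \cite{Mo} and used as a black box in the proof of Lemma \ref{lem0}, so there is no in-paper proof to measure yours against. Judged on its own terms, your outline has a reasonable general shape (rewrite the twist as $(-1/m)$-surgery on $C$, isolate the trivial examples, split on the geometry of the link exterior), but the decisive step does not work as stated.

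The gap is in the hyperbolic case. Filling $\partial\nu(C)$ along the meridian gives the solid torus $S^3\setminus\nu(K)$, whose fundamental group is cyclic; but filling along $-1/m$ gives the exterior of the torus knot $K_{-m}$, whose fundamental group is neither cyclic nor finite (unless $K_{-m}$ is trivial). So the cyclic surgery theorem does not apply to this pair of slopes, and the Boyer--Zhang bounds you invoke concern cyclic versus finite fillings. For the pair that actually occurs here --- a boundary-reducible filling versus an annular/Seifert fibered filling --- the general distance bounds in the literature (Gordon--Wu and related work) are $2$ or more, and distance $2$ is realized by examples. Since $\Delta(-1/m,\infty)=|m|$, your machinery yields at best $|m|\le 2$ and leaves the case $|m|=2$ completely open; ruling it out is precisely where the content of Motegi's theorem lies and requires an argument tailored to this configuration (for instance, a graph-of-intersections analysis of how the cabling annulus of the torus knot exterior and the compressing disk of the solid torus meet the filling tori). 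The non-hyperbolic half is also only asserted: classifying two-component links of unknots with Seifert fibered or toroidal exterior, and checking that each such configuration is either a trivial example or forces $m=\pm1$, is a substantial piece of work rather than a routine verification. As written, the proposal is a plausible plan of attack, not a proof.
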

Trivial examples are shown in Figure \ref{tri}.
In trivial examples, $C$ is ambient isotopic (in $S^3-K$) to the core loop
of a solid torus whose boundary torus contains $K$ and $K_{-m}$.

\begin{figure}[htbp]
\begin{center}
\includegraphics[scale=0.6]{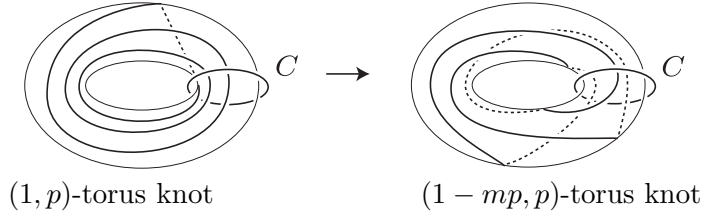}

$(1,p)$-torus knot
\hspace{2.5cm}
$(1-mp,p)$-torus knot
\end{center}
\caption{Trivial example: A $(1-mp,p)$-torus knot is obtained from a $(1,p)$-torus knot, which is a trivial knot, by $(-m)$-twist along a trivial knot $C$.}
\label{tri}
\end{figure}

By applying Theorem \ref{mo} for $C=c_M$ and $K=\gamma_b$,
we obtain the following lemma.

\begin{lemma}\label{lem0}
If $\gamma_b$ is a trivial knot, then one of the following holds.
\begin{enumerate}
\item [(1)] $|p|\le 1$ and $|q|\le 1$.
Namely $(p,q)=\pm (1,0)$, $\pm (0,1)$, $\pm (1,1)$, or $\pm (1,-1)$.
\item [(2)] $n=1$ and $(p,q)=\pm (1,-2)$.
\item [(3)] $n=-1$ and $(p,q)=\pm (1,2)$.
\item [(4)] $m=\pm 1$.
\end{enumerate}
\end{lemma}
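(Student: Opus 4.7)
The plan is to apply Motegi's Theorem \ref{mo} to the $(-m)$-twist along $c_M$ and, symmetrically, to the $(-n)$-twist along $c_N$, and then combine the two conclusions. After the $(-m)$-twist, as established just before the lemma, $S$ sits on a standardly embedded torus $T_M \subset S^3$, so $\gamma_b$ is carried to a simple closed curve $\gamma_b^M$ on $T_M$, which is therefore a torus knot. Because the $(-m)$-twist is a self-homeomorphism of $S^3$ and $\gamma_b$ is assumed to be trivial, the knot $\gamma_b^M$ is also trivial. Applying Theorem \ref{mo} to the triple $(K, C, K_{-m}) = (\gamma_b, c_M, \gamma_b^M)$, one of two things must happen: either $m = \pm 1$, which immediately produces conclusion (4); or $(\gamma_b, c_M)$ is a Motegi-trivial example in the sense of Figure \ref{tri}, meaning that $c_M$ is isotopic in $S^3 \setminus \gamma_b$ to the core of a solid torus $V$ on whose boundary $\gamma_b$ lies as a $(1, p_0)$-torus curve. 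The completely analogous argument for the $(-n)$-twist along $c_N$ gives either $n = \pm 1$ or a trivial-example structure for $(\gamma_b, c_N)$ with some parameter $q_0$.

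Having set up both dichotomies, I would combine them case-by-case to recover the four conclusions. If $m = \pm 1$ we are in case (4) immediately. When $m \neq \pm 1$ and $n \neq \pm 1$, both trivial examples hold simultaneously, and insisting that one and the same simple closed curve $\gamma_b \subset S$ admit cable descriptions over both $c_M$ and $c_N$ is highly restrictive: tracking its homology class $(p, q) \in H_1(S) \cong \mathbb{Z}^2$ forces it to be one of the eight slopes $\pm(1,0), \pm(0,1), \pm(1,1), \pm(1,-1)$ listed in conclusion (1). When exactly one of $m, n$ is $\pm 1$, say $m \neq \pm 1$ and $n = 1$, only the $(\gamma_b, c_M)$ Motegi-trivial example is forced by the theorem, and it must be combined with the explicit geometry of a plumbing summand with a single full twist to extract the additional solution $(p, q) = \pm(1, -2)$ (conclusion (2)); the symmetric $n = -1$ analysis gives $\pm(1, 2)$, i.e.\ conclusion (3).

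I expect the main obstacle to be the final translation step. The Motegi-trivial example condition is intrinsically a statement about the \emph{link type} of $(\gamma_b, c_M)$ in $S^3$, but the lemma is phrased in terms of the linking-number slope $(p, q) = (lk(\gamma_b, c_M), lk(\gamma_b, c_N))$. Converting between the two requires carefully tracking how a simple closed curve of slope $(p, q)$ on the specific plumbed Seifert surface $S$ meets disks bounded by $c_M$ and $c_N$, and verifying when the geometric intersection is the minimum allowed by the algebraic linking. The mixed-parity cases, where only one of $m, n$ equals $\pm 1$, are the most delicate because only a single Motegi constraint is available and the remaining information must come from the ambient embedding of $S$, in particular from how the one-twist annulus forces the $\pm 2$ in conclusions (2) and (3).
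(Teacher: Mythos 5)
Your opening move is the right one --- apply Motegi's Theorem~\ref{mo} to the $(-m)$-twist along $c_M$, conclude that either $m=\pm1$ (conclusion (4)) or the pair $(\gamma_b,c_M)$ is a trivial example in the sense of Figure~\ref{tri} --- and this is exactly how the paper's proof begins. But there is a factual error at the outset: you assert that $\gamma_b^M$ is trivial ``because the $(-m)$-twist is a self-homeomorphism of $S^3$.'' A twist along $c_M$ is $(-1/m)$-surgery on $c_M$; it returns $S^3$ but does \emph{not} preserve the knot type of a curve linking $c_M$. In the paper's proof the twisted knot $\gamma_b'$ can be, and sometimes is, a nontrivial torus knot even though $\gamma_b$ is trivial, and the argument must split into the two subcases ``$\gamma_b'$ trivial'' and ``$\gamma_b'$ a nontrivial torus knot'' (the latter is where one of the appearances of conclusion (4) comes from). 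Your version collapses this dichotomy and would miss that case entirely.

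The larger problem is that the substantive content of the lemma --- the passage from the trivial-example structure to the explicit list of pairs $(p,q)$ --- is exactly the step you defer. You propose to apply Theorem~\ref{mo} a second time to the $(-n)$-twist along $c_N$ and then argue that admitting cable descriptions over both $c_M$ and $c_N$ is ``highly restrictive'' and ``forces'' the eight slopes of conclusion (1); no argument is given, and the mixed cases producing $(p,q)=\pm(1,\mp2)$ in conclusions (2) and (3) are attributed to unspecified ``explicit geometry.'' The paper never needs the second application of Motegi. Instead it uses the fact that after the single $(-m)$-twist the knot $\gamma_b'$ lies on a standard torus, and computes its torus-knot type in two independent ways: as a $(\pm1-mp,\,p)$-torus knot from the trivial-example structure, and as a $(q,\,p+nq)$-torus knot from the linking numbers of $\gamma_b'$ with $c_N$ and with the core $c$ of the complementary solid torus. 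Equating (or, in the trivial subcase, bounding) these two descriptions yields concrete Diophantine conditions --- e.g.\ $|p|=|p+nq|=1$ gives $|nq|\in\{0,2\}$, which is precisely where the $\pm2$ of conclusions (2) and (3) originates. Your proposal never introduces the core curve $c$ or the quantity $p+nq$, so it has no mechanism to produce these constraints; as written, the proof does not go through.
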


\begin{rem}
If $(1)$ of Lemma \ref{lem0} holds, then for any $m$ and $n$, $\gamma_b$ is trivial.
If $(2)$ or $(3)$ holds, then for any $m$, $\gamma_b$ is trivial.
\end{rem}
\begin{proof}[Proof of Lemma \ref{lem0}]
Let $\gamma_b'$ be a knot which is obtained from $\gamma_b$ by $(-m)$-twist along $c_M$.
The standard torus on which $\gamma_b'$ lies divides $S^3$ into the union of two solid tori.  The loop $c_N$ maps to the core knot for one of these tori.  Let $c$ be the core knot for the other torus as in Figure \ref{core}.

\begin{figure}[htbp]
\begin{center}
\includegraphics[scale=0.6]{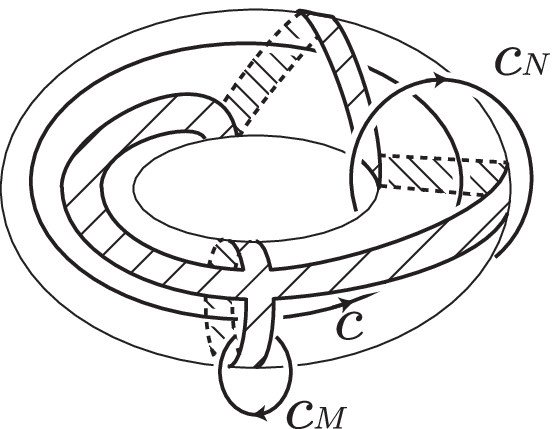}
\caption{
}
\label{core}
\end{center}
\end{figure}

Then linking numbers can be calculated as follows.
\[lk(\gamma_b',c_M)=lk(\gamma_b,c_M)=p,\]
\[lk(\gamma_b',c_N)=lk(\gamma_b,c_N)=q,\]
\[lk(\gamma_b',c)=p+nq.\]

By applying Theorem \ref{mo} for $C=c_M, K=\gamma_b, K_{-m}=\gamma_b'$, 
except trivial examples, we have the conclusion (4) 
of Lemma \ref{lem0}.
Thus it is enough to consider the trivial examples.
Since a $(-m)$-twist along $C_M$ represents a trivial example, we have 
\begin{equation}
\gamma_b'=(\pm 1-mp,p)\mbox{-torus knot}.\label{ptorus}
\end{equation}
In other hand, from the linking numbers of $\gamma_b'$ with $c_N$ and with $c$ above, we have
\begin{equation}
\gamma_b'=(q,p+nq)\mbox{-torus knot}.\label{qtorus}
\end{equation}

Suppose $\gamma_b'$ is the trivial knot. From equation (\ref{ptorus}), $|p|=1$ or $|\pm 1-mp|=1$.  
From  equation (\ref{qtorus}), $|q|=1$ or $|p+nq|=1$.
If $|p|=|q|=1$, then the conclusion (1) of Lemma \ref{lem0} holds.
If $|p|=|p+nq|=1$, then $|nq|=1\pm|p|=0$ or $2$, so one of the conclusions (1),(2),(3) of Lemma \ref{lem0} holds.
If $|p|\neq 1$ and $|\pm 1-mp|=1$, then $|mp|\le 2$, so $(p,q)=\pm(0,1)$ or $m=\pm 1$, i.e. the conclusion (1) or (4) of Lemma \ref{lem0} holds.

Suppose $\gamma_b'$ is a non-trivial torus knot. 
From equation (\ref{ptorus}), $|p|\ge 2$.  
From equation (\ref{qtorus}), $|q|\ge 2$.  The
integers $p$ and $q$ are relatively prime, so $|p|\neq|q|$. 
By contrasting  equations (\ref{ptorus}) and (\ref{qtorus}), $(\pm 1-mp,p)=\pm(q,p+nq)$.
Since $nq\neq 0$,
\begin{equation}
\pm 1-mp=-q, \mbox{ and}\label{1-mp=-q}
\end{equation}
\begin{equation}
p=-(p+nq).\label{-p=p-nq}
\end{equation} 
If $|q|<|p|$, by  equation (\ref{1-mp=-q}),
$|q|<|p|\le |mp|=|q\pm 1|\le |q|+1$. then $|p|=|mp|$, 
so the conclusion  (4) of Lemma \ref{lem0} holds.
If $|q|>|p|$,  by  equation (\ref{-p=p-nq}), $2|p|=|nq|>|np|$, then $|n|=1$ and $2|p|=|q|$.
Since integers $p$ and $q$ are relatively prime and $|p|\ge 2$, it does not happen.
\end{proof}

Next we state a well known lemma of a braid presentation for 
a trivial knot (see, for example, \cite{MR2191944}).

\begin{lemma}\label{lem1} 
If a trivial knot has a positive or negative $n$-braid presentation with $m$ crossings, 
then $m=n-1$.
\end{lemma}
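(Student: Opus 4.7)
The plan is to exploit the fact that the closure of a positive braid is a fibered link whose Seifert surface coming from the braid diagram realizes the minimum Seifert genus, reducing the lemma to a one-line Euler characteristic computation.

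First I would apply Seifert's algorithm to the standard diagram of the closure of a positive $n$-braid $\beta$ with $m$ crossings. The resulting Seifert surface $F$ is built from $n$ Seifert disks (one per braid strand) joined by $m$ half-twisted bands (one per crossing), so $\chi(F) = n - m$. Because $\partial F = \hat\beta$ has a single component (the unknot), $F$ is connected with one boundary circle, and hence $\chi(F) = 1 - 2g(F)$.

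Next I would invoke Stallings's theorem that the closure of a positive braid is fibered with fiber surface $F$, which in particular forces $F$ to realize the genus of $\hat\beta$. Since the unknot has genus $0$, this gives $g(F) = 0$ and hence $n - m = 1$, i.e., $m = n-1$. The negative-braid case follows by taking the mirror image, which converts the braid into a positive one with the same strand and crossing counts whose closure is still the unknot.

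The only non-elementary ingredient is Stallings's theorem (equivalently, the sharpness of Bennequin's genus inequality on positive braids); once this is granted the conclusion is immediate. A self-contained variant would combine Bennequin's inequality, which for a positive $n$-braid with $m$ crossings reads $m - n \le 2g(\hat\beta) - 1 = -1$, with the lower bound $m \ge n-1$ coming from the fact that any $n$-cycle in $S_n$ requires at least $n-1$ adjacent transpositions to express.
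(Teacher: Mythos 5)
Your argument is correct. Note that the paper does not actually prove Lemma \ref{lem1}: it records the statement as a well-known fact and cites a reference, so there is no in-paper proof to compare against. Your route is the standard one: the Bennequin surface $F$ of a positive $n$-braid with $m$ crossings has $\chi(F)=n-m$, and by Stallings' fibration theorem for positive braid closures (equivalently, the sharpness of Bennequin's inequality) $F$ realizes the Seifert genus, so for the unknot $1-2g=n-m$ forces $m=n-1$; the negative case reduces to the positive one by mirroring, since the unknot is amphichiral. Two small points are worth making explicit: $F$ is connected because the closure being a knot forces every generator $\sigma_i$ to appear at least once (otherwise the closure would split into a link of more than one component), which is what justifies $\chi(F)=1-2g(F)$; and your closing observation that $m\ge n-1$ follows from the underlying permutation being an $n$-cycle is a nice consistency check but is not needed once minimal-genus realization is invoked. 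Either of your two variants (Stallings, or Bennequin's inequality combined with the permutation lower bound) is complete and acceptable.
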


We say that two bands attached to a link $L $ are {\it equivalent with respect to} $L$ 
if there exists an ambient isotopy of $S$ in $S^3$
such that one band is moved to the other by the isotopy
and $L$ is fixed as a set throughout the isotopy.
Now we state the main theorem of this section.

\begin{thm}\label{Thm:fix}
Let $L$ be a $2$-bridge knot $N(\frac{4mn-1}{2m})$ in $S^3$
with $m, n\neq 0$.
Suppose that $b$ is a band of a coherent band surgery from $L$ to $L_b$, and $L_b$ is a $2$-bridge link $N(2k)$
with 
linking number $-k$.
Then the band $b$ is equivalent to one of the six bands $b_1,b_2,b_3,b_4,b_5$ and $ b_6$ in Figure \ref{fixb} with respect to $L$.
\end{thm}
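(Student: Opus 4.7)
The plan is to combine the reductions that have been set up. Since $\chi(L)=-1$ and $\chi(L_b)\in\{0,2\}$, the hypothesis $\chi(L)\le\chi(L_b)-1$ of Theorem \ref{hs} is satisfied, so the band $b$ can be isotoped into a minimal genus Seifert surface $S$ of $L$. By the Hatcher--Thurston classification recalled via \cite{HT}, $S$ is equivalent to one of the two plumbed surfaces $S_1,S_2$ in Figure \ref{ss}, and I may therefore assume $b\subset S$. The curve $\gamma_b$ dual to $b$ in $S$ has the property that the two boundary components of the annulus Cl$(S-b)$ are parallel copies of $\gamma_b$, so $L_b$ is a cabling of $\gamma_b$; hence $L_b=N(2k)$ forces $\gamma_b$ to be the unknot in $S^3$.

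With this reduction in hand I invoke the parametrization $\pm(p,q)=\pm(lk(\gamma_b,c_M),lk(\gamma_b,c_N))$ and apply Lemma \ref{lem0}. Case (1) yields the four small pairs $\pm(1,0), \pm(0,1), \pm(1,1), \pm(1,-1)$, each determining a unique band in $S$ up to isotopy fixing $L$. Cases (2) and (3) contribute the two further pairs $\pm(1,-2)$ and $\pm(1,2)$ under the side conditions $n=1$ and $n=-1$ respectively. Using the bijection between (unoriented) essential simple closed curves in $S$ and bands in $S$, I translate each pair $(p,q)$ back into a concrete band, obtaining exactly the six configurations $b_1,\ldots,b_6$ of Figure \ref{fixb}.

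The remaining case (4) of Lemma \ref{lem0}, namely $m=\pm 1$, does not enlarge the list. In that degenerate situation $S_1$ is equivalent to $S_2$ by \cite{K}, and the equivalence interchanges the roles of $c_M$ and $c_N$; running Theorem \ref{mo} with a $(-n)$-twist along $c_N$ in place of an $(-m)$-twist along $c_M$ produces exactly the same admissible $\gamma_b$, so no further bands appear beyond those already listed. The hypothesis that $L_b$ has linking number precisely $-k$ (rather than $+k$) likewise does not add new bands; it only selects, within each of the six equivalence classes, the orientation consistent with the coherent band surgery inherited from $L$ across $b$, which was already used via Theorem \ref{mu2} to rule out the $+k$ case at the beginning of the section.

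The main obstacle I expect is the translation between the abstract parametrization $(p,q)$ and the explicit pictures of the bands $b_i$. While the enumeration of trivial $\gamma_b$ is clean from Lemma \ref{lem0}, one must check carefully that distinct algebraic pairs $(p,q)$ do not descend to the same equivalence class of bands once only ambient isotopies fixing $L$ setwise are allowed, and conversely that the listed pairs remain pairwise inequivalent as bands. Verifying for each of the six candidates that the resulting coherent band surgery does yield a $(2,2k)$-torus link $N(2k)$ with linking number exactly $-k$ is elementary but must be carried out individually using signed crossing counts.
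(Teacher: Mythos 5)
Your reduction to a band lying in a minimal genus Seifert surface, the passage to the dual unknotted curve $\gamma_b$, and the treatment of cases (1)--(3) of Lemma \ref{lem0} all match the paper's argument. The genuine gap is in your dismissal of case (4). You argue that when $m=\pm 1$ one can exchange the roles of the two plumbed annuli and rerun Theorem \ref{mo} with a $(-n)$-twist along $c_N$, obtaining ``exactly the same admissible $\gamma_b$.'' But the symmetric version of Lemma \ref{lem0} has its own escape clause $n=\pm 1$, so your argument yields nothing precisely when $|m|=|n|=1$, i.e.\ when $L$ is the trefoil or the figure-eight knot. This is not a degenerate afterthought: it is where most of the work in the paper's proof lies. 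For the figure-eight knot ($m=1$, $n=-1$) there are \emph{infinitely many} primitive classes $(p,q)$ with $\gamma_b$ unknotted --- the Fibonacci pairs $\pm(F_i,F_{i+1})$ and $\pm(F_{i+1},-F_i)$ --- and for one chirality of the trefoil the extra pairs $\pm(2,-1)$, $\pm(1,-2)$ occur. None of these are produced by cases (1)--(3), so your claim that ``no further bands appear beyond those already listed'' is unsupported as stated.

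The paper closes this case by two additional ingredients you would need to supply. First, it enumerates the unknotted $\gamma_b$ for $|m|=|n|=1$ by putting $\gamma_b$ into a positive or negative braid form (Figures \ref{bp}--\ref{m=-1n=1}) and invoking the fact that an unknot with a positive (or negative) $n$-braid presentation has exactly $n-1$ crossings (Lemma \ref{lem1}); this is what produces the Fibonacci condition. Second --- and this is the step your proposal has no substitute for --- it must show that these infinitely many distinct homology classes $(p,q)$ nevertheless give bands equivalent to $b_1,b_2$ (figure-eight) or $b_1,b_3$ (trefoil) \emph{with respect to} $L$. This is done by using that $L$ is fibered, so the monodromy $\eta_*$ acts on $H_1(S)$ and sweeps the Fibonacci classes into just two orbits containing $(1,0)$ and $(0,1)$. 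Note also that your stated worry is in the wrong direction: the issue is not that distinct pairs $(p,q)$ might collapse to the same band (they must, and proving that they do is the point of the monodromy argument), but that without it you cannot conclude the list of six bands is exhaustive. A further minor inaccuracy: even for $m=\pm1$ with $|n|\ge 2$, the swapped application of Theorem \ref{mo} does not give ``exactly the same'' pairs but the transposed ones $(p,q)=\pm(\mp 2,1)$, which must then be matched to $b_3,\dots,b_6$ via the equivalence of $S_1$ and $S_2$ from \cite{K}; this bookkeeping is absent from your sketch.
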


\begin{figure}[htbp]
\begin{center}
  \includegraphics[scale=0.71]{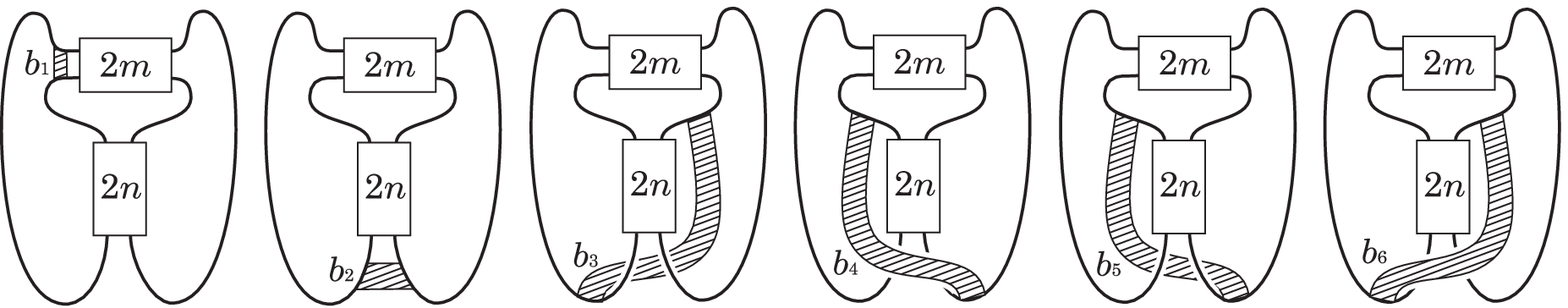}
    
    \hspace{6mm}
    $k=m$\hspace{12.5mm}
    $k=n$\hspace{6.5mm}
    $k=m+n+1$\hspace{2mm}
    $k=m+n+1$\hspace{2mm}
    $k=m+n-1$\hspace{2mm}
    $k=m+n-1$
\end{center}
\caption{
}
\label{fixb}
\end{figure}
\begin{rem}\label{rem}
\begin{enumerate}
\item 
The band for $(p,q)=\pm (1,0)$ and $S=S_1$ or $S_2$ corresponds to $b_1$.
\item 
The band for $(p,q)=\pm (0,1)$ and $S=S_1$ or $S_2$ corresponds to $b_2$.
\item 
The band for $(p,q)=\pm (1,1)$ and $S=S_1$ $($resp. $S_2)$ corresponds to $b_3$ $($resp. $b_4)$.
\item 
The band for $(p,q)=\pm (1,-1)$ and $S=S_1$ $($resp. $S_2)$ corresponds to $b_5$ $($resp. $b_6)$.
\end{enumerate}
\end{rem}

\begin{rem}
Suppose that $m=\pm1$ or $n=\pm1$. Then two Seifert surfaces $S_1$ and $S_2$ are equivalent (fixing $L$). By the ambient isotopy of this equivalence, we have the following:
If $m=1$ $($resp. $m=-1)$, then the three bands $b_2,b_5,b_6$ $($resp. $b_2,b_3,b_4)$ are equivalent to each other with respect to $L$. 
If $n=1$ $($resp. $n=-1)$, then the three bands $b_1,b_5,b_6$ $($resp. $b_1,b_3,b_4)$ are equivalent to each other with respect to $L$. 
If $m=n=1$ $($resp. $m=n=-1)$ $($i.e. $L$ is the trefoil knot$)$, 
then the four bands $b_1,b_2,b_5,b_6$ $($resp. $b_1,b_2,b_3,b_4)$ are equivalent
to each other with respect to $L$, and the two bands $b_3,b_4$ $($resp. $b_5,b_6)$ are equivalent with respect to $L$. If $m=1,n=-1$ $($resp. $m=-1,n=1)$ $($i.e. $L$ is the figure eight knot$)$, 
then the three bands $b_1,b_3,b_4$ $($resp. $b_1,b_5,b_6)$ are equivalent
to each other with respect to $L$, and the three bands $b_2,b_5,b_6$ $($resp. $b_2,b_3,b_4)$ are equivalent to each other with respect to $L$. 

\end{rem}

\begin{proof}[Proof of Theorem \ref{Thm:fix}]
Let $L$ be a $2$-bridge knot $N(\frac{4mn-1}{2m})$ in $S^3$
with $m, n\neq 0$. 
Since $L$ is symmetric for $m$ and $n$, i.e. $N(\frac{4mn-1}{2n})$ 
is ambient isotopic to $N(\frac{4mn-1}{2m})$, 
without loss of generality, we may assume that $|m|\ge |n|\ge 1$.
Suppose, for a band $b$, $L_b$ is a $2$-bridge link $N(2k)$
with linking number $-k$.
We may assume that $b\subset S$ where $S$ is a minimal Seifert surface
of $L$, and so $S=S_1$ or $S_2$ in Figure \ref{ss}.

Suppose the conclusion (1) of Lemma \ref{lem0} holds.
Then the band is equivalent to one of $b_1,b_2,b_3,b_4,b_5$ and $b_6$ 
with respect to $L$, see Remark \ref{rem}.

Suppose the conclusion (2) of Lemma \ref{lem0} holds. 
In this case $S_1$ and $S_2$ are equivalent. 
By the ambient isotopy of this equivalence, the band for 
$(p,q)=\pm(1,-2)$ and $S=S_1$ (resp. $S=S_2$) is equivalent to $b_4$ (resp. $b_3$) with respect to $L$.

Suppose the conclusion (3) of Lemma  \ref{lem0} holds. 
Similarly to above, the band for 
$(p,q)=\pm(1,2)$ and $S=S_1$ (resp. $S=S_2$) is equivalent to $b_6$ (resp. $b_5$) with respect to $L$.

\begin{figure}[htbp]
\begin{center}
 \includegraphics[scale=0.6]{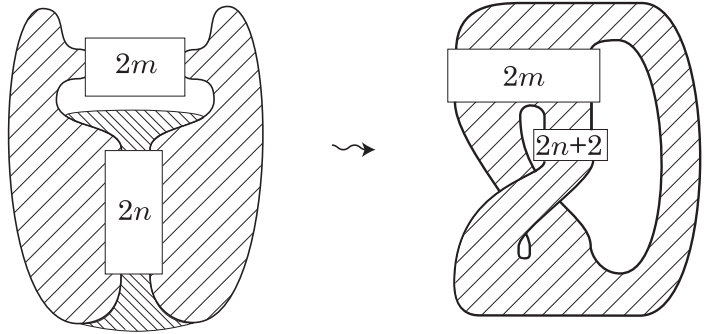}
\end{center}
\caption{Recall that a positive (negative) number inside a box corresponds to right (left) handed half twists.  For the figure on the left, the box containing $2m$ represents $m$ horizontal twists. The remaining three boxes contain vertical twists. 
}
\label{bp}
\end{figure}

\begin{figure}[htbp]
  \begin{center}
      \includegraphics[scale=0.6]{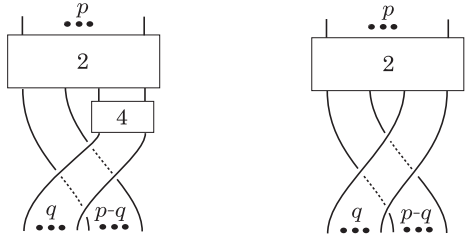}
  \end{center}
  \caption{On the left : $m=n=1$. On the right : $m=1,n=-1$.}
  \label{m=n=1}
\end{figure}


\begin{figure}[htbp]
\begin{center}
      \includegraphics[scale=0.6]{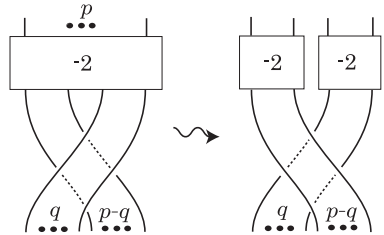}
      \end{center}
\caption{$m=n=-1,p>q>0$:  The $p$-string braid on the left simplifies via two right-handed half-twists to the negative braid on the right.  Note that the right braid has $p - 1$ crossings (and hence its closure is the unknot) if and only if $p = 2$ and $q = 1$.   }
\label{m=n=-1}
\end{figure}

\begin{figure}[htbp]
\begin{center}
      \includegraphics[scale=0.6]{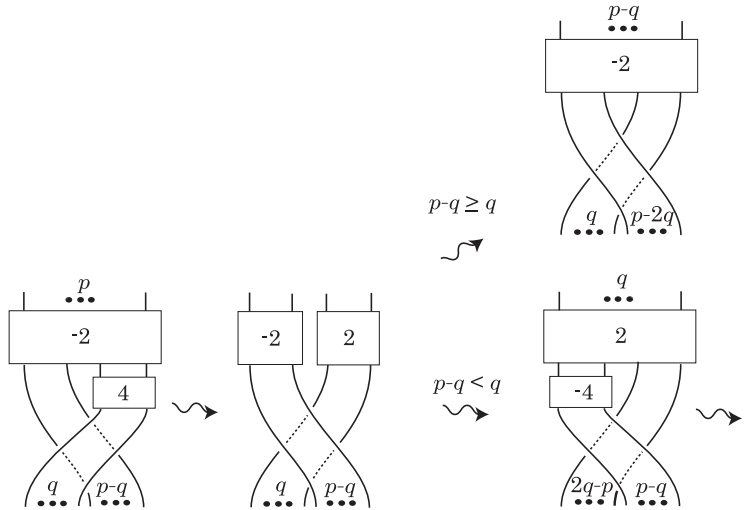}
      \end{center}
\caption{$m=-1,n=1,p>q>0$:  The braid on the left simplifies to one of the braids on the right.  If $p - q \geq q$, then a $(p-q)$-string negative braid is obtained (top right).
This $(p-q)$-string braid closes to the unknot if and only if $p - q = q = 1$
If  $p - q < q$, then a $q$-string  braid is obtained which can be further simplified (bottom right).  This $q$-string braid closes to the unknot if and only if $(p, q) = (F_{i+1}, F_i)$  where $F_i$ is the $i$-th Fibonacci number.  
}
\label{m=-1n=1}
\end{figure}
From now on we will consider the remaining case where $|m|=|n|=1$.
There are symmetries of $\gamma_b$: i.e. the parameters $(m,n,p,q)=(a,b,c,d),(b,a,d,c)$ and $(-a,-b,c,-d)$ determine the same knot type of $\gamma_b$ up to mirror image.
Thus it is enough to consider the case where $p>q>0$. 
Then we obtain a $p$-string braid presentation for $\gamma_b$ by moving $S$ as shown in Figure \ref{bp}.
Using this braid presentations for $\gamma _b$ and Lemma \ref{lem1}, 
we will decide whether or not the knot $\gamma _b$ is trivial.
There are three cases for $m$ and $n$: 
the first is where $m=1$ and $n=\pm1$; 
the second is where $m=-1$ and $n=1$; 
and the third is where $m=n=-1$.
%
In the first case,
$\gamma_b$ has a positive braid presentation as shown in Figure \ref{m=n=1}, and hence  $\gamma_b$ is a non-trivial knot.
In the second case,
by simplifying a braid presentation, 
$\gamma_b$ has a negative braid presentation as shown in Figure \ref{m=n=-1}.
Hence $\gamma_b$ is a non-trivial knot
except when $(p,q)=(2,1)$.
In the third case,
we can simplify a braid presentation inductively if necessary as shown in Figure \ref{m=-1n=1}.
Then $\gamma_b$ is a trivial knot 
if and only if $(p,q)=(F_{i+1},F_i)$, where $i$ is any positive integer 
and $F_i$ is the $i$-th Fibonacci number; 
$F_0=0,F_1=1,F_2=1,F_3=2,F_4=3,F_5=5,\ldots$.
By reconsidering the symmetries of $\gamma_b$, 
we can summarise a necessary and sufficient condition 
for a pair $(p,q)$ to determine a trivial knot $\gamma_{b}$ 
for $m=n=1$ and for $m=1,n=-1$ as follows.

Suppose that $m=n=1$, i.e. $L$ is the trefoil knot.
A knot $\gamma _b$ is trivial if and only if $(p,q)=\pm (0,1)$,$\pm (1,0)$, $\pm (1,1)$,
$\pm (1,-1)$, $\pm (2,-1)$, or $\pm (1,-2)$. 
Now $L$ is a fibered knot and $S$ is a fibered surface.
Let $\eta :S\to S$ be the monodromy map and let $\eta_{*}:H_1(S)\to H_1(S)$ be the homomorphism induced by $\eta$. 
We regard $(p,q)$ as an element of $H_1(S)$ and define two elements of $H_1(S)$ to be equivalent 
if $\eta_{*}^{k}$ maps one to the other for an integer $k$.
Then $\pm (1,0),\pm (0,1)\pm (1,-1)$ belong to an equivalence class, 
and $\pm (1,1),\pm (2,-1),\pm (1,-2)$ belong to another one. 
This implies that the band is equivalent to $b_1$ or $b_3$ 
with respect to $L$, 
see Remark \ref{rem}.

Suppose that $m=1$ and $n=-1$, i.e. $L$ is the figure eight knot.
A knot $\gamma_b$ is trivial if and only if 
$(p,q)=\pm (F_i,F_{i+1})$ or $\pm (F_{i+1},-F_i)$ 
for any non-negative integer $i$. 
Now $L$ is a fibered knot and $S$ is a fibered surface. 
Let $\eta :S\to S$ be the monodromy map and 
let $\eta^{*}:H_1(S)\to H_1(S)$ be the homomorphism induced by $\eta$. 
We regard $(p,q)$ as an element of $H_1(S)$ and define two elements of $H_1(S)$ to be equivalent 
if $\eta_{*}^{k}$ maps one to the other for an integer $k$.
Then a set 
$A=\{ (F_{2i-1},-F_{2i-2})\ |\ i\in\mathbb{N}\}\cup
\{(F_{2i-1},F_{2i})\ |\ i\in\mathbb{N}\}$ is an equivalence class 
and a set $B=\{ (-F_{2i},F_{2i-1})\ |\ i\in\mathbb{N}\}\cup
\{(F_{2i-2},F_{2i-1})\ |\ i\in \mathbb{N}\}$ is another one. 
Note that the set $A$ contains $(1,0)$ and the set $B$ contains $(0,1)$. 
This implies that the band is equivalent to $b_1$ or $b_2$ 
with respect to $L$, see Remark \ref{rem}.
This completes the proof of Theorem \ref{Thm:fix}.
\end{proof}

In Figure \ref{fixb}, the two unions $L\cup b_3$ and $ L\cup b_4$ 
(resp. $L\cup b_5$ and $L\cup b_6$) of a knot $L$ and bands are ambient isotopic to each other, and they are ambient isotopic to the third (resp. the fourth) of Figure \ref{nfixb}.
Then we obtain the following theorem.

\begin{figure}[htbp]
\begin{center}
\includegraphics[scale=0.8]{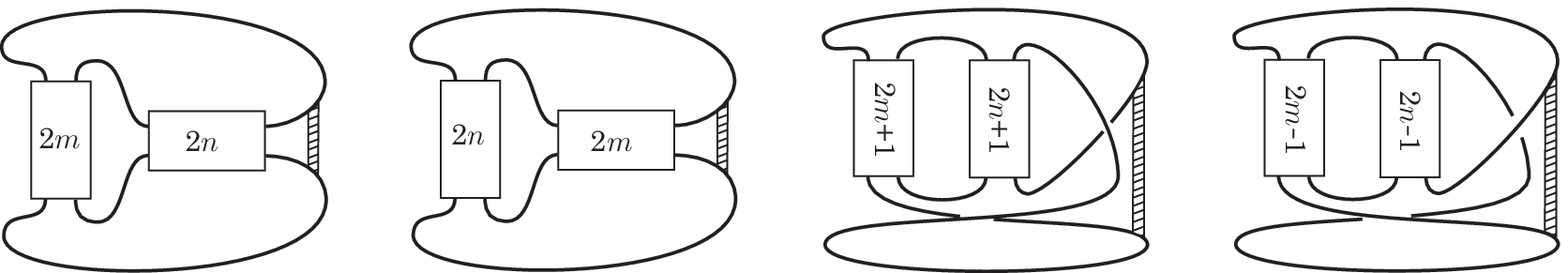}

\hspace{5mm}$k=m$\hspace{2.5cm}$k=n$\hspace{2cm}$k=m+n+1$\hspace{1cm} $k=m+n-1$
\end{center}
\caption{
}
\label{nfixb}
\end{figure}

\begin{thm}\label{Thm:nfix}
Let $L$ be a $2$-bridge knot $N(\frac{4mn-1}{2m})$ in $S^3$ with $m, n\neq 0$.
Suppose that $b$ is a band of a coherent band surgery from $L$ to $L_b$, and $L_b$ is a $2$-bridge link $N(2k)$
with 
linking number $-k$.
Then the union $L\cup b$ of a knot $L$ and a band $b$ is ambient isotopic to one of
four in Figure \ref{nfixb}.
\end{thm}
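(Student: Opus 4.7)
The plan is to combine Theorem \ref{Thm:fix} with the $\pi$-rotation symmetry noted earlier in Section \ref{proof}, which interchanges the two minimal genus Seifert surfaces $S_1$ and $S_2$ at the cost of moving $L$ as a set (rather than keeping it fixed). First I would apply Theorem \ref{Thm:fix} to conclude that, up to ambient isotopy fixing $L$, the band $b$ is equivalent to one of the six model bands $b_1,\dots,b_6$ pictured in Figure \ref{fixb}. Hence, even in the weaker sense of Theorem \ref{Thm:nfix}, there are at most six ambient isotopy classes of the union $L\cup b$, indexed by $b_1,\dots,b_6$.

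Next I would exhibit the ambient isotopy of $S^3$ that realizes the $\pi$-rotation sending $S_1$ onto $S_2$. Under this rotation the two annular pieces of the plumbed Seifert surface are interchanged, so the roles of the parameters $m$ and $n$, and correspondingly the two generators $(1,0)$ and $(0,1)$ of $H_1(S)$, are swapped. By the parametrization in the remark after Theorem \ref{Thm:fix}, the band labeled $b_3$ comes from the class $\pm(1,1)$ on $S_1$ while $b_4$ comes from the same class $\pm(1,1)$ on $S_2$; similarly $b_5$ and $b_6$ both come from $\pm(1,-1)$ but on $S_1$ and $S_2$ respectively. Therefore the $\pi$-rotation provides an ambient isotopy of pairs $(S^3, L\cup b_3)\cong (S^3, L\cup b_4)$ and $(S^3, L\cup b_5)\cong (S^3, L\cup b_6)$. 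This collapses the six candidates to at most four equivalence classes: those represented by $b_1$, $b_2$, $\{b_3,b_4\}$, and $\{b_5,b_6\}$.

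Finally I would match these four classes with the four diagrams in Figure \ref{nfixb}. The union $L\cup b_1$ yields the first diagram (with $k=m$), $L\cup b_2$ yields the second (with $k=n$), a direct planar isotopy of $L\cup b_3$ (equivalently of $L\cup b_4$) produces the third diagram (with $k=m+n+1$), and $L\cup b_5$ (equivalently $L\cup b_6$) produces the fourth (with $k=m+n-1$). The identification of the values of $k$ is forced by computing the number of crossings of the resulting $2$-braid closure $L_b$ and comparing with Figure \ref{ori}.

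The main obstacle is the explicit diagrammatic identification in the last paragraph: one must verify that after the $\pi$-rotation, the $b_3$ (or $b_4$) picture really simplifies via ambient isotopy to the third configuration of Figure \ref{nfixb}, and likewise for $b_5,b_6$ and the fourth configuration. Once this picture-level check is done, the theorem follows formally from Theorem \ref{Thm:fix} together with the symmetry $S_1\leftrightarrow S_2$.
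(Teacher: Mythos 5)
Your proposal follows essentially the same route as the paper: the paper derives Theorem \ref{Thm:nfix} directly from Theorem \ref{Thm:fix} by observing that $L\cup b_3$ and $L\cup b_4$ (resp.\ $L\cup b_5$ and $L\cup b_6$) are ambient isotopic via the $\pi$-rotation interchanging $S_1$ and $S_2$ (which moves $L$ as a set), collapsing the six bands to the four configurations of Figure \ref{nfixb}. The remaining diagrammatic identification you flag is likewise asserted at the picture level in the paper, so your argument is a faithful reconstruction.
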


\begin{proof}[Proof of Theorem \ref{Thm:rts}]
We consider a 
$(0,\frac{1}{w})$ move
as a band surgery as shown in Figure \ref{br-surgery}. 
Then we obtain Theorem \ref{Thm:rts} from Theorem \ref{Thm:nfix} immediately.
\end{proof}

\section{Non-band rational tangle surgery case}\label{non-band}

Let $c(K)$ be the crossing number of the knot $K$.  
In this section we will characterize all non-band rational tangle surgeries
on $N(2k)$ yielding $N(\frac{4mn-1}{2m})$,
where $c(N(2k))= 2k$ and $c(N(\frac{4mn-1}{2m}))=2k+1$,
for $k=3, 4$ or $5$.

Suppose $N(U+0)=N(2k)$ ($=(2,2k)$-torus link), and $N(U+\frac{t}{w})=$ a $2$-bridge knot $N(\frac{z}{v})$. Per below the solutions to this system of equations are of the form
  $\frac{t}{w}=\frac{z-2kv'}{v'-(z-2kv')h}$ and $U=(\frac{2k}{2kh+1})$, 
where $h$ is any integer and $v'$ is an integer which satisfies $v'\equiv v^{\pm 1}$ mod $z$, and so $N(\frac{z}{v'})=N(\frac{z}{v})$.
We show that there are no  other solutions of the non-band rational tangle surgery for the cases where $\frac{z}{v}=\frac{4mn-1}{2m}$, $mn>0$, $|m+n|=k+1$ 
(which is the condition for $c(N(\frac{4mn-1}{2m}))=2k+1$, and $k=3,4$ or $5$.

\begin{thm}
\label{non-band6cat}
Suppose $N(U+0)=N(6)$ and $t\neq\pm 1$.
\begin{enumerate}
\item If $N(U+\frac{t}{w})=7_2$ $(\frac{z}{v}=\pm\frac{11}{2})$, 
then $\frac{t}{w}=\frac{11-6v'}{v'-(11-6v')h}$ and $U=(\frac{6}{6h+1})$, 
where $h$ is any integer and $v'$ is an integer which satisfies $v'\equiv \pm 2^{\pm 1}$ mod $11$.
\item If $N(U+\frac{t}{w})=7_4$ $(\frac{z}{v}=\pm\frac{15}{4})$, 
then $\frac{t}{w}=\frac{15-6v'}{v'-(15-6v')h}$ and $U=(\frac{6}{6h+1})$, 
where $h$ is any integer and $v'$ is an integer which satisfies $v'\equiv \pm 4^{\pm 1}$ mod $15$.
\end{enumerate}
\end{thm}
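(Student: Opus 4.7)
The plan is to show that under the stated hypotheses the tangle $U$ must be rational, then pin down its form using the classification of rational tangles, and finally recover the parametrization of $\frac{t}{w}$ by an elementary tangle-sum computation. First, both numerator closures $N(U+0)=N(6)$ and $N(U+\frac{t}{w})=N(11/2)$ (resp.\ $N(15/4)$) are 2-bridge links, whose double branched covers are lens spaces. Lifting $U$ to the double branched cover of $(S^3,\partial U)$ produces a compact 3-manifold $M$ with a single torus boundary admitting two distinct Dehn fillings yielding lens spaces ($L(6,1)$ and $L(11,2)$, respectively). By the Cyclic Surgery Theorem of Culler--Gordon--Luecke--Shalen, together with results on tangle fillings producing 2-bridge links (compare \cite{D_unor}), $M$ must be a solid torus, which forces $U$ to be a rational tangle.

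Once $U$ is known to be rational, the classification of 2-bridge links applied to $N(U)=N(6)$ forces $U=\frac{6}{6h+1}$ for some $h\in\mathbb{Z}$ (the opposite congruence class modulo $6$ being absorbed by rational tangle equivalence and reflection). Substituting this into the tangle sum gives
\[
U + \frac{t}{w} \;=\; \frac{6w + t(6h+1)}{(6h+1)w}.
\]
Requiring that this rational tangle present $7_2 = N(11/v')$ for some $v'\equiv\pm 2^{\pm 1}\pmod{11}$, and setting $t=11-6v'$ and $w=v'-(11-6v')h$, makes the numerator $6w+t(6h+1)=6v'+t=11$ identically, with denominator $(6h+1)(v'-(11-6v')h)\equiv v'(6h+1)^2\pmod{11}$ lying in the right equivalence class. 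This yields the parametrization $\frac{t}{w}=\frac{11-6v'}{v'-(11-6v')h}$ asserted in the theorem. Part (2) follows by the same argument with $11$ replaced by $15$ and the congruence on $v'$ adjusted to $v'\equiv\pm 4^{\pm 1}\pmod{15}$.

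The hypothesis $t\neq\pm 1$ excludes the band surgery regime, which is handled independently by Theorem \ref{Thm:rts} and Corollary \ref{6cat}. The main obstacle is the first step, namely ruling out \emph{non-rational} tangles $U$ that could a priori satisfy the system; the parametrization and verification are then elementary rational tangle arithmetic. The weight of the proof lies in the 3-manifold topology underlying the rationality argument, whereas Steps 2 and 3 amount to matching rational numerators and denominators modulo $11$ (resp.\ $15$).
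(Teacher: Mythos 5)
Your overall architecture (reduce to the rational case, then do rational‑tangle arithmetic) matches the paper's, but the key step --- forcing $U$ to be rational --- does not go through as you state it. The Cyclic Surgery Theorem, applied to the double branched cover $M$ of $(B^3,U)$ with two lens space fillings at distance $|t|\ge 2$, only tells you that $M$ is \emph{not} hyperbolic: it can still be reducible or a Seifert fibered space over the disk with two exceptional fibers (e.g.\ a torus knot exterior or cable space), and such manifolds genuinely admit several lens space fillings at distance greater than one. Downstairs this means $U$ need not be rational; it is only a \emph{generalized $M$-tangle} (a sum of rational tangles composed with a circle product), which is exactly the content of Ernst's theorem (Theorem \ref{E}) that the paper invokes. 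Your sentence ``$M$ must be a solid torus, which forces $U$ to be a rational tangle'' is therefore false as stated, and the non-rational generalized $M$-tangle solutions are left unexcluded.

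The paper closes this gap with Theorem \ref{Thm:D_gM}(b): a non-rational generalized $M$-tangle solution forces $w\equiv\varepsilon=\pm1 \pmod t$ and
\[
N\Bigl(\tfrac{z}{v}\Bigr)=N\Bigl(\tfrac{tp(pb-qa)+\varepsilon a}{tq(pb-qa)+\varepsilon b}\Bigr),
\qquad a=6,\ b=1,
\]
so that $|t\,p\,(p-6q)|=|z-6\varepsilon|\in\{5,9,17,21\}$; since non-rationality forces $|p|>1$ and $|p-6q|>1$, and $|t|>1$ by hypothesis, none of $5$, $9$, $17$, $21$ factors as a product of three integers each of modulus greater than one, a contradiction. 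You need this (or an equivalent) argument before your Steps 2--3, which are otherwise in line with the paper's appeal to Corollary \ref{Cor:D} for the rational case.
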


\begin{thm}
\label{non-band8cat}
Suppose $N(U+0)=N(8)$ and $t\neq\pm 1$.
\begin{enumerate}
\item If $N(U+\frac{t}{w})=9_2$ $(\frac{z}{v}=\pm\frac{15}{2})$, 
then $\frac{t}{w}=\frac{15-8v'}{v'-(15-8v')h}$ and $U=(\frac{8}{8h+1})$, 
where $h$ is any integer and $v'$ is an integer which satisfies $v'\equiv \pm 2^{\pm 1}$ mod $15$.
\item If $N(U+\frac{t}{w})=9_5$ $(\frac{z}{v}=\pm\frac{23}{4})$, 
then $\frac{t}{w}=\frac{23-8v'}{v'-(23-8v')h}$ and $U=(\frac{8}{8h+1})$, 
where $h$ is any integer and $v'$ is an integer which satisfies $v'\equiv \pm 4^{\pm 1}$ mod $23$.
\end{enumerate}
\end{thm}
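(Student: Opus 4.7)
The plan is to mirror the proof of Theorem \ref{non-band6cat}, adapting the arithmetic from $k = 3$ (i.e., $N(U+0) = N(6)$) to $k = 4$. Parts (1) and (2) are handled in parallel; only the target $2$-bridge knot changes, namely $9_2 = N(15/2)$ versus $9_5 = N(23/4)$.

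The first step is to show that under the hypotheses $N(U+0) = N(8)$ and $t \neq \pm 1$, the tangle $U$ must itself be a rational tangle. The natural tool is the double branched cover: the cover of $S^3$ branched over $N(U+0) = N(8)$ is the lens space $L(8,1)$, in which $U$ lifts to a knot $\widetilde{U}$. A $(0, t/w)$ tangle surgery lifts to $t/w$-Dehn surgery on $\widetilde{U}$, whose result must be the lens space covering the target $2$-bridge knot, namely $L(15,2)$ in case (1) and $L(23,4)$ in case (2). The hypothesis $t \neq \pm 1$ corresponds to a non-integral surgery slope, and cyclic-surgery-type restrictions for knots in lens spaces then force $\widetilde{U}$ to be a ``simple'' (generalized torus) knot in $L(8,1)$. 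In tangle language, this means $U$ must be rational.

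Once $U$ is known to be rational, I would write $U = a/b$ with $\gcd(a,b) = 1$. The condition $N(a/b) = N(8)$ forces $a = \pm 8$ and $b \equiv \pm 1 \pmod{8}$; after absorbing signs using the standard equivalences of rational tangles, this gives $U = \frac{8}{8h+1}$ for some integer $h$. The tangle sum then yields
\[
N\!\left(\frac{8}{8h+1} + \frac{t}{w}\right) = N\!\left(\frac{8w + t(8h+1)}{(8h+1)w}\right),
\]
and this must equal $N(z/v')$ where $v' \equiv \pm v^{\pm 1} \pmod{z}$ ranges over the $2$-bridge equivalence class of the target. Equating numerators and denominators and invoking Theorem \ref{t-equival} to absorb the freedom in $h$ produces the stated formula $\frac{t}{w} = \frac{z - 8v'}{v' - (z - 8v')h}$. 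Specializing to $z/v = 15/2$ yields part (1), and $z/v = 23/4$ yields part (2).

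The main obstacle is the first step, establishing rationality of $U$ in the non-band regime; the remainder is a routine arithmetic computation parallel to the proof of Theorem \ref{non-band6cat}, with the single substitution $k = 3 \mapsto k = 4$. Neither the primality of $z = 23$ in case (2) nor the compositeness of $z = 15$ in case (1) introduces any new structural difficulty in the final calculation.
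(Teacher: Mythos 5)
There is a genuine gap at the step you yourself identify as the main obstacle: establishing that $U$ is rational. The surgery-theoretic restriction you invoke (lifting to the double branched cover $L(8,1)$ and applying cyclic-surgery-type constraints for a non-integral slope $t/w$ with $|t|>1$) is exactly the content of Ernst's theorem (Theorem \ref{E} in the paper), and it yields only that $U$ is a \emph{generalized $M$-tangle} --- a circle product of a finite sum of rational tangles with a sequence of integers, equivalently a tangle whose double branched cover complement is Seifert fibered. This class is strictly larger than the class of rational tangles (e.g.\ $\frac{-1}{3}+\frac{-1}{3}$ is a generalized $M$-tangle that is not rational), so the sentence ``In tangle language, this means $U$ must be rational'' does not follow. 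Your argument therefore silently discards the case of a non-rational generalized $M$-tangle solution, and nothing in your write-up rules it out.

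The paper closes exactly this case by a separate arithmetic argument: if $U$ is a generalized $M$-tangle that is not rational, then case (b) of Theorem \ref{Thm:D_gM} applies with $w\equiv\varepsilon=\pm 1 \bmod t$, and matching the numerator of the product knot forces $|tp(p-8q)|=|z-8\varepsilon|$, which for $z=15$ or $23$ takes one of the values $7, 15, 23, 31$. Non-rationality of $U$ forces $|p|>1$ and $|p-8q|>1$, and the hypothesis gives $|t|>1$, so $|tp(p-8q)|$ would have to be a product of three integers each of absolute value greater than $1$; none of $7,15,23,31$ admits such a factorization. You need this (or an equivalent) exclusion before the reduction to $U=\frac{8}{8h+1}$ is legitimate. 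Your subsequent computation in the rational case is essentially the content of Corollary \ref{Cor:D} and is fine in outline, though the identification $N(\frac{8w+t(8h+1)}{(8h+1)w})=N(\frac{z}{v'})$ should be justified by Schubert's classification of $2$-bridge links rather than by ``equating numerators and denominators.''
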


\begin{thm}
\label{non-band10cat}
Suppose $N(U+0)=N(10)$ and $t\neq\pm 1$.
\begin{enumerate}
\item If $N(U+\frac{t}{w})=11a247$ $(\frac{z}{v}=\pm\frac{19}{2})$, 
then $\frac{t}{w}=\frac{19-10v'}{v'-(19-10v')h}$ and $U=(\frac{10}{10h+1})$, 
where $h$ is any integer and $v'$ is an integer which satisfies $v'\equiv \pm 2^{\pm 1}$ mod $19$.
\item If $N(U+\frac{t}{w})=11a343$ $(\frac{z}{v}=\pm\frac{31}{4})$, 
then $\frac{t}{w}=\frac{31-10v'}{v'-(31-10v')h}$ and $U=(\frac{10}{10h+1})$, 
where $h$ is any integer and $v'$ is an integer which satisfies $v'\equiv \pm 4^{\pm 1}$ mod $31$.
\item If $N(U+\frac{t}{w})=11a363$ $(\frac{z}{v}=\pm\frac{35}{6})$, 
then $\frac{t}{w}=\frac{35-10v'}{v'-(35-10v')h}$ and $U=(\frac{10}{10h+1})$, 
where $h$ is any integer and $v'$ is an integer which satisfies $v'\equiv \pm 6^{\pm 1}$ mod $35$.
\end{enumerate}
\end{thm}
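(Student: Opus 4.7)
The plan is to pass to the double branched cover and then apply the Cyclic Surgery Theorem. Let $M_U$ denote the double branched cover of the pair $(B^3,U)$; it is a compact $3$-manifold with a single torus boundary component. Under this cover, the hypothesis $N(U+0)=N(10)$ translates into the statement that the $0$-slope Dehn filling of $M_U$ is the lens space $L(10,1)$, while $N(U+t/w)=N(z/v)$ translates into the $t/w$-slope filling being $L(z,v)$. The distance between these two slopes is $|t|$, which is at least $2$ because $t\neq 0,\pm 1$.

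The first and most delicate step is to show that $M_U$ must be a solid torus, equivalently that $U$ is a rational tangle. By the Cyclic Surgery Theorem of Culler--Gordon--Luecke--Shalen, an irreducible, non-Seifert-fibered $M_U$ with infinite fundamental group admits cyclic fillings only at distance at most $1$, which is incompatible with $|t|\ge 2$. One must then rule out the reducible case (using that the $0$-filling is an irreducible lens space) and, among Seifert fibered possibilities, argue that only a solid torus admits two distinct genuine lens space fillings at distance $\ge 2$; this analysis, based on the classification of exceptional Dehn fillings on Seifert fibered manifolds with torus boundary, is where I expect the main obstacle to lie.

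Granting that $U$ is rational, write $U=a/b$. The $2$-bridge classification gives $N(a/b)=N(10)$ if and only if $a=\pm 10$ and $b\equiv\pm 1\pmod{10}$, so after absorbing signs we may take $U=\frac{10}{10h+1}$ for some $h\in\mathbb{Z}$. Using the $2$-bridge classification on the other side, the equation $N(U+t/w)=N(z/v)$ becomes the single rational equation
\[
\frac{10}{10h+1}+\frac{t}{w}=\frac{z}{v'}
\]
for some integer $v'$ with $v'\equiv v^{\pm 1}\pmod{z}$. Clearing denominators and solving for $t/w$ gives exactly $\frac{t}{w}=\frac{z-10v'}{v'-(z-10v')h}$; here the freedom in $h$ corresponds to the equivalence of $(0,t/w)$ moves from Theorem \ref{t-equival}, while the freedom in $v'$ reflects the $2$-bridge ambiguity in presenting $N(z/v)$.

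Finally, specializing to the three $11$-crossing $2$-bridge knots $11a247=N(\pm 19/2)$, $11a343=N(\pm 31/4)$, and $11a363=N(\pm 35/6)$, the residue condition $v'\equiv v^{\pm 1}\pmod{z}$ reads $v'\equiv \pm 2^{\pm 1}\pmod{19}$, $v'\equiv \pm 4^{\pm 1}\pmod{31}$, and $v'\equiv \pm 6^{\pm 1}\pmod{35}$ respectively, yielding the three cases of the theorem. Theorems \ref{non-band6cat} and \ref{non-band8cat} would follow by the identical scheme with $2k=6$ and $2k=8$.
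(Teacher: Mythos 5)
There is a genuine gap at the step you yourself flagged, and it cannot be closed in the way you propose. Your plan reduces everything to the claim that a manifold with two lens space fillings at distance $|t|\ge 2$ must be a solid torus, i.e.\ that $U$ must be rational. This is false: a Seifert fibered space over the disk with two exceptional fibers (the exterior of a $(p,q)$-torus knot is the archetype) has lens space fillings at every slope $r$ with $|pqn-r\cdot(\text{denominator})|=1$; in particular the slopes $pq+1$ and $pq-1$ are both lens space slopes and are at distance $2$. Such manifolds arise exactly as double branched covers of sums of two rational tangles, so non-rational $U$ genuinely survive the Cyclic Surgery Theorem. What that theorem (in the form of Ernst's result, Theorem \ref{E} in the paper) actually gives for $|t|>1$ is that $U$ is a \emph{generalized $M$-tangle} --- a circle product of a sum of rational tangles --- not that $U$ is rational. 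The paper's proof then splits according to Theorem \ref{Thm:D_gM}: if $U$ is rational, Corollary \ref{Cor:D} yields the stated solutions; if not, one is in case (b), where $w\equiv\varepsilon\pmod t$ and $|tp(pb-qa)|=|z-10\varepsilon|$ with $|t|,|p|,|pb-qa|$ all greater than $1$. For $z=19,31,35$ this forces $|tp(p-10q)|\in\{9,21,25,29,41,45\}$, and a short factorization check (only $45=3\cdot3\cdot5$ has three factors $>1$, and then $|p(p-10q)|=9$ or $15$ has no admissible integer solution) rules this case out. Your argument has no substitute for this arithmetic step, which is where the actual content of the theorem lies; the conclusion ``$U$ is rational'' is true here only because of it, not for general topological reasons.

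A secondary, smaller imprecision: in the rational case the numerator closure of a tangle sum is not computed by literal addition of tangle fractions; one has $N(\frac{a}{b}+\frac{t}{w})=N(\frac{z}{v})$ forcing $aw+tb=\pm z$ together with a congruence on the denominator mod $z$ (this is the content of Corollary \ref{Cor:D}), so the displayed equation $\frac{10}{10h+1}+\frac{t}{w}=\frac{z}{v'}$ should be read as shorthand for that statement rather than as an identity of rational numbers. This part is repairable; the missing Seifert fibered case is not, without the additional argument above.
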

 
The above results can also be obtained by using the software TopoIce-R
\cite{Darcy15072006} within Knotplot \cite{SchPhD}.  
This software implements the following theorems.

\begin{thm}
\cite{E}\label{E}
If $N(N+0)=N(\frac{a}{b})$ and $N(U+\frac{t}{w})=N(\frac{z}{v})$ and if $|t|>1$, 
then $U$ is a generalized $M$-tangle or equivalently, 
$U$ is obtained from a finite sum of rational tangles by a circle product with a finite sequence of integers.
\end{thm}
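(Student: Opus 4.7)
The plan is to pass to the double branched cover $\Sigma(U)$ of the defining $3$-ball of $U$, branched over the two arcs of $U$. This is a compact orientable $3$-manifold with a single torus boundary, and for any rational tangle $\tfrac{p}{q}$ the numerator closure $N(U+\tfrac{p}{q})$ lifts to the Dehn filling of $\Sigma(U)$ along the slope corresponding to $\tfrac{p}{q}$ on $\partial\Sigma(U)$, because the double branched cover of a ball along a rational tangle is a solid torus whose meridian has slope $\tfrac{p}{q}$. Since the double branched cover of $S^3$ along a $2$-bridge link $N(\tfrac{x}{y})$ is the lens space $L(x,y)$, the hypothesis translates into saying that $\Sigma(U)$ admits two Dehn fillings giving lens spaces: one along slope $\tfrac{0}{1}$ yielding $L(a,b)$, and one along $\tfrac{t}{w}$ yielding $L(z,v)$. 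Both are cyclic fillings, and their geometric intersection number on $\partial\Sigma(U)$ is $|0\cdot w - 1\cdot t|=|t|$, which is at least $2$ by hypothesis.

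Next I would invoke the Cyclic Surgery Theorem of Culler, Gordon, Luecke, and Shalen: a compact orientable irreducible $3$-manifold with incompressible torus boundary admitting two cyclic Dehn fillings at distance at least $2$ must be Seifert fibered. The two exceptional hypotheses are handled separately. If $\partial\Sigma(U)$ is compressible, then $\Sigma(U)$ is a solid torus, which forces $U$ itself to be rational and hence a (degenerate) generalized $M$-tangle. If $\Sigma(U)$ is reducible, an equivariant reducing $2$-sphere, obtained from the equivariant sphere theorem applied to the covering involution, descends to a splitting disk in $(B^3,U)$ and produces a decomposition $U = U_1 + U_2$ with strictly simpler summands; an induction on tangle complexity reduces us to the Seifert fibered and rational cases.

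The final step is to translate the Seifert fibered structure on $\Sigma(U)$ into the tangle-theoretic description of $U$ as a generalized $M$-tangle. By the equivariant Seifert fibration theorem (Meeks--Scott, together with the Bonahon--Siebenmann extension for manifolds with torus boundary), the Seifert fibration on $\Sigma(U)$ can be arranged to be preserved by the covering involution, so it descends to an orbifold Seifert structure on the pair $(B^3,U)$. The exceptional fibers meeting the branch locus encode rational tangle summands, and the framing data on $\partial\Sigma(U)$ encodes the terminal circle product with an integer sequence; combining these gives the desired presentation of $U$ as a finite sum of rational tangles followed by a circle product with an integer sequence.

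The main obstacle I anticipate is this last equivariant step: one must verify in every Seifert fibered type permitted by two cyclic fillings at distance $\geq 2$ that the hyperelliptic involution can be isotoped to be fiber-preserving, including the borderline small Seifert fibered cases where the fibration is not a priori unique. Once this is in hand, extracting the rational summands and the final integer circle product from the Seifert invariants is essentially bookkeeping, and the statement follows.
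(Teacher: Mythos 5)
This theorem is quoted from the cited reference \cite{E}; the paper contains no proof of its own, so there is no in-paper argument to compare against. Your outline --- passing to the double branched cover $\Sigma(U)$ so that the two tangle equations become two cyclic (lens space) fillings at distance $|t|\ge 2$, invoking the Cyclic Surgery Theorem (with the compressible-boundary and reducible cases split off), and then descending the resulting Seifert fibration equivariantly to exhibit $U$ as a generalized $M$-tangle --- is essentially the strategy of Ernst's original proof, and the step you flag as the main obstacle (making the covering involution fiber-preserving, including the small Seifert fibered cases) is indeed where the real technical work of that argument lies.
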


\begin{thm}
\cite[Theorem 3]{D_unor}\label{Thm:D_gM}
$N(U+0)=N(\frac{a}{b})$ and $N(U+\frac{t}{w})=N(\frac{z}{v})$ where $U$ is a generalized $M$-tangle 
if and only if the following hold.
\begin{itemize}
\item[(a)] If $w\not\equiv\pm 1$ mod $t$, then there exists an integer, $b'$ such that $b'b^{\pm 1}=1$ mod $a$, 
       and for any integers $x$ and $y$ such that $b'x-ay=1$, 
       \[N(\frac{z}{v})=N(\frac{tb'+wa}{ty+wx}).\]
       In this case, $U=\frac{a}{b'}$ for all $b'$ satisfying the above.
\item[(b)] If $w\equiv\varepsilon=\pm 1$ mod $t$ $(w=ht+\varepsilon)$, 
       then there exists relatively prime integers, $p$ and $q$,  
       where $p$ may be chosen to be positive, such that, 
       \[N(\frac{z}{v})=N(\frac{tp(pb-qa)+\varepsilon a}{tq(pb-qa)+\varepsilon b}).\]
       In this case, the solutions for $U$ are $(\frac{da-jp}{pb-qa}+\frac{j}{p})\circ (h,0)$ 
       and $(\frac{j}{p}+\frac{da-jp}{pb-qa})\circ (h,0)$, for all $p,q$ satisfying the above, 
       $d$ and $j$ are integers such that $pd-qj=1$ 
       (note, the choice of $j$ and $d$ such that $pd-qj=1$ has no effect on $U$).
\end{itemize}
\end{thm}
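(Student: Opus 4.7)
The plan is to classify all generalized $M$-tangles $U$ satisfying $N(U+0) = N(a/b)$ and then compute $N(U+t/w)$ for each such $U$. By definition a generalized $M$-tangle has the form $U = (T_1 + \cdots + T_k) \circ (c_1, \ldots, c_n)$ with each $T_i$ rational, and by Theorem \ref{t-equival} any trailing horizontal twist $\circ(h,0)$ can be absorbed into the surgery parameter via $w \mapsto w - ht$, so I may assume the final circle product is either empty or exactly $(h,0)$. The hypothesis that both numerator closures are 2-bridge forces $k \leq 2$: a Montesinos sum of three or more nondegenerate rational tangles has 2-bridge numerator closure only in degenerate cases that collapse to $k \leq 2$. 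This cleanly separates the analysis into the rational case (matching part (a)) and the two-summand case (matching part (b)).

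In the rational case $U = a/b'$, Schubert's classification of 2-bridge knots gives $N(a/b') = N(a/b)$ if and only if $b' \equiv b^{\pm 1}$ mod $a$. The standard rational sum $t/w + a/b' = (tb' + wa)/(wb')$, combined with a choice of integers $x, y$ satisfying the Bezout identity $b'x - ay = 1$, reduces the fraction to the form $N(U + t/w) = N((tb' + wa)/(ty + wx))$, which is exactly the formula of case (a). In the two-summand case I would write $U = (j/p + (da - jp)/(pb-qa)) \circ (h,0)$ with $pd - qj = 1$, verify by direct fraction arithmetic that $N(U+0) = N(a/b)$ (the computation reduces to Bezout identities between $(p,q)$ and $(a,b)$), and then apply the Montesinos numerator-closure formula together with Theorem \ref{t-equival} to compute $N(U + t/w)$, producing the displayed formula of case (b). This forward direction is essentially bookkeeping once the normal form has been fixed.

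The main obstacle is the converse (rigidity) direction: showing that every generalized $M$-tangle solution must take one of the two normal forms, and that the dichotomy $w \not\equiv \pm 1$ mod $t$ versus $w \equiv \pm 1$ mod $t$ is forced by the tangle data rather than being an artifact of the parametrization. My approach is to pass to the double-branched cover, where the equations $N(U+0)=N(a/b)$ and $N(U+t/w)=N(z/v)$ translate into two Dehn fillings of the two-fold cover of the tangle exterior producing lens spaces. Standard results on lens-space fillings then constrain the relative slope data, identify the integers $p,q$ that appear in case (b), and force the congruence $w \equiv \pm 1$ mod $t$ as the exact condition for a non-rational two-summand $U$ to exist. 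Translating the filling data back into explicit tangle parameters is an intricate but routine fraction-arithmetic exercise that would be organized by parametrizing $(p,q,j,d)$ as in the statement.
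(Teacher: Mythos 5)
You should first note that this paper does not prove the statement at all: Theorem~\ref{Thm:D_gM} is imported verbatim as \cite[Theorem 3]{D_unor} and used as a black box (together with Theorem~\ref{E}) to drive the non-band computations in Section~\ref{non-band}. So there is no in-paper proof to compare against; your proposal has to be judged against the strategy of the cited reference, and there your overall plan --- normalize the generalized $M$-tangle, bound the number of rational summands by the $2$-bridge hypothesis, do the fraction arithmetic for the forward direction, and pass to double branched covers and lens-space Dehn fillings for rigidity --- is indeed the standard route and the one the original proof follows.

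That said, two steps in your sketch are genuine gaps rather than bookkeeping. First, your normalization ``I may assume the final circle product is either empty or exactly $(h,0)$'' is not justified by Theorem~\ref{t-equival}, which only absorbs a trailing horizontal twist by shifting $w\mapsto w-ht$; a generalized $M$-tangle carries a circle product by an arbitrary sequence $(c_1,\ldots,c_n)$, whose double branched cover is in general a graph manifold, and ruling out the longer sequences (e.g.\ by atoroidality of lens spaces, or by showing the circle product collapses) is a real step you have skipped. Second, the converse is where the entire content of the dichotomy lives, and ``standard results on lens-space fillings'' is carrying all of that weight: you need to know precisely which fillings of a Seifert fibered space over the disk with two exceptional fibers yield lens spaces, show that this forces $w\equiv\pm1\pmod t$ exactly when $U$ is a non-rational two-summand tangle, and then translate the filling slopes back into the $(p,q,j,d)$ parametrization with $pd-qj=1$; none of this is routine and it is the heart of the theorem. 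A smaller slip: in case (a) the tangle sum $\frac{t}{w}+\frac{a}{b'}$ is \emph{not} a rational tangle (a sum of two rational tangles is rational only when one summand is integral), so the formula $N(\frac{tb'+wa}{ty+wx})$ must be derived from the numerator-closure/double-cover computation for a two-tangle Montesinos link, not from ``standard rational sum'' arithmetic --- the conclusion is right but the stated justification is not.
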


\begin{cor}
\cite[Corollary 2]{D_unor}\label{Cor:D}
Suppose
 $bx - ay = 1$, $N(U + \frac{0}{1}) = N({\frac{a}{b}})$ and 
$N(U + \frac{t}{w}) = N(\frac{z}{v})$ 
where $N(\frac{a}{b})$ and $N(\frac{z}{v})$ are unoriented $2$-bridge
 knots or links.  
If $w \not\equiv \pm 1$ or if $U$
 is rational, 
then $\frac{t}{w} = \frac{xz - av'}{bv' - yz - ht}$ and $U = \frac{a}{b + ha}$, 
or $\frac{t}{w} = \frac{bz - av'}{ xv' - yz - ht}$ and $U = \frac{a}{x + ha}$, 
where $v'$ is any
integer such that $v'v^{\pm 1} = 1$ mod $z$.  
If $w \equiv \pm 1$ mod
$t$, then $t$ divides $z \mp a$.
\end{cor}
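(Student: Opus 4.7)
The plan is to derive this corollary as a concrete unpacking of Theorem \ref{E} combined with Theorem \ref{Thm:D_gM}. Under either hypothesis of the corollary, I would first show that $U$ must take the rational form $a/b'$. If $w\not\equiv\pm 1\pmod t$, then necessarily $|t|>1$, so Theorem \ref{E} makes $U$ a generalized $M$-tangle; since case (b) of Theorem \ref{Thm:D_gM} requires $w\equiv\pm 1\pmod t$, we must instead land in case (a), whose output is $U=a/b'$. If $U$ is rational but $w\equiv\pm 1\pmod t$, then Theorem \ref{Thm:D_gM}(b) describes $U$ as a sum $(\frac{da-jp}{pb-qa}+\frac{j}{p})\circ(h,0)$ of two rational tangles; such a sum is rational only when one summand is an integer tangle, and tracing the two subcases ($p=\pm 1$, or $(pb-qa)\mid(da-jp)$) again puts $U$ in the form $a/b'$, reducing us to the case (a) formula.

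Next I would parameterize the valid $b'$. Since $bx\equiv 1\pmod a$, the constraint $b'b^{\pm 1}\equiv 1\pmod a$ from case (a) has exactly two congruence classes modulo $a$, namely $b'\equiv b$ and $b'\equiv x$, giving the two families $b'=b+ha$ and $b'=x+ha$ for $h\in\mathbb{Z}$. Substituting each family into the key relation $N(z/v)=N((tb'+wa)/(ty'+wx'))$ of Theorem \ref{Thm:D_gM}(a), where $(x',y')$ is chosen so that $b'x'-ay'=1$, I would solve for $t$ and the auxiliary quantity $W:=th+w$ using the linear identities $x'(\mathrm{num})-a(\mathrm{den})=t(b'x'-ay')=t$ and $b'(\mathrm{den})-y'(\mathrm{num})=W$. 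For $b'=b+ha$ one takes $(x',y')=(x,y+hx)$, and after matching the fraction to $z/v'$ (with $v'v^{\pm 1}\equiv 1\pmod z$ absorbing both the 2-bridge equivalence and any common factor of numerator and denominator), these identities produce $t=xz-av'$ and $w=bv'-yz-ht$. The parallel calculation with $(x',y')=(b,y+hb)$ for $b'=x+ha$ yields $t=bz-av'$ and $w=xv'-yz-ht$, matching the second branch of the corollary.

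The final divisibility assertion follows by inspection of Theorem \ref{Thm:D_gM}(b): if $w\equiv\varepsilon\pmod t$ with $\varepsilon=\pm 1$, then the fraction representing $N(z/v)$ has numerator $tp(pb-qa)+\varepsilon a$, so reducing modulo $t$ and comparing with the Schubert numerator $z$ (again absorbing the reduction to lowest terms and the 2-bridge ambiguity into $v'$) gives $z\equiv\varepsilon a\pmod t$, i.e.\ $t\mid z\mp a$ with the sign convention of the statement. The main obstacle will be the bookkeeping in the first step, namely verifying that rationality of $U$ in case (b) genuinely forces $U$ back into the form $a/b'$ so that the case (a) formula exhausts all solutions, together with the careful handling of the 2-bridge equivalence $N(\alpha/\beta)=N(\alpha/\beta^{\pm 1})$ and the possibility of non-reduced fractions, both of which must be packaged cleanly into the parameter $v'$.
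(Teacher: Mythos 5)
This statement is imported verbatim from \cite[Corollary 2]{D_unor}; the paper gives no proof of it, so there is nothing internal to compare against, and your proposal should be judged as a reconstruction of the derivation in \cite{D_unor} from the two results the paper does quote, Theorems \ref{E} and \ref{Thm:D_gM}. As such a reconstruction it is essentially correct and is surely the intended route: the observation that $w\not\equiv\pm 1\pmod t$ forces $|t|>1$ (so Theorem \ref{E} applies) and excludes case (b) of Theorem \ref{Thm:D_gM}; the identification of the two residue classes $b'\equiv b$ and $b'\equiv x$ modulo $a$ from $b'b^{\pm1}\equiv 1$; and the inversion of the unimodular system $tb+aW=z$, $ty+xW=v'$ (with $W=th+w$) to get $t=xz-av'$, $w=bv'-yz-ht$, and likewise for the second branch, are all exactly right. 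Two points deserve more than the "bookkeeping" label you give them. First, the claim that a rational $U$ arising in case (b) (so $|p|=1$ or $|pb-qa|=1$) collapses back into the case (a) parametrization is true but not automatic: you must actually compute $(\frac{da-jp}{pb-qa}+\frac{j}{p})\circ(h,0)$ in the degenerate subcases and check that the resulting pair $(\frac{t}{w},U)$ is reproduced by the displayed formula for a suitable $h$ and $v'$; a sanity check (e.g.\ $\frac{a}{b}=\frac{3}{1}$, $p=1$, $q=-1$, giving $U=\frac{3}{4}$, $\frac{t}{w}=\frac{2}{1}$, $\frac{z}{v}=\frac{11}{4}$, matched by $h=1$, $v'=3$) confirms it, but the general verification is a real computation, not packaging. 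Second, your derivation of the final assertion ($w\equiv\pm1\pmod t$ implies $t\mid z\mp a$) reads it off the case (b) numerator and therefore only covers $U$ a generalized $M$-tangle; the corollary as stated imposes no such restriction on $U$, and in \cite{D_unor} this divisibility is obtained homologically (from the order of $H_1$ of the double branched covers under the corresponding Dehn surgery), which is what makes it valid for arbitrary $U$. For the way the present paper uses the corollary this gap is harmless, but it is a gap in your argument as a proof of the statement as written.
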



 If $w \not\equiv \pm 1$ or if $U$ is rational and if $N({\frac{a}{b}}) = N(2k)$, then $\frac{t}{w}
 = \frac{z - 2kv'}{v' - (z - 2kv')h}$ and $U = \frac{2k}{1 + 2kh}$.
If 
  $w\equiv\varepsilon=\pm 1$ mod $t$, then  $t$, $p$, and $pb-qa$ in part (b) of Theorem \ref{Thm:D_gM} are all factors of $z \mp a$.
Note that if $U$ is not rational in Theorem \ref{Thm:D_gM}, then both $|p| > 1$ and $|pb - qa| > 1$.  We will use this to show that the conclusion (b) of Theorem \ref{Thm:D_gM} does not occur under the assumptions of 
Theorem \ref{non-band6cat}, \ref{non-band8cat} or \ref{non-band10cat}.

\begin{proof}[Proof of Theorem \ref{non-band6cat}]
Suppose $k=3$ and $\frac{z}{v}=\frac{4mn-1}{2m}=\pm\frac{11}{2}$ or
 $\pm\frac{15}{4}$.  
If $U$ is rational, then Theorem \ref{non-band6cat} holds by Corollary \ref{Cor:D}.
Suppose $U$ is not rational.   
Then $|tp(p-6q)|=|z-6\varepsilon|=5,9,17$ or $21$.
But this is not possible since 
 $|t|$, $|p|$, and $|p - 6q|$ are all greater than 1.
 \end{proof}

\begin{proof}[Proof of Theorem \ref{non-band8cat}]
Suppose $k=4$ and $\frac{z}{v}=\frac{4mn-1}{2m}=\pm\frac{15}{2}$ or $\pm\frac{23}{4}$.
 If $U$ is rational, then Theorem \ref{non-band8cat} holds by Corollary \ref{Cor:D}.
Suppose $U$ is not rational.  
Then $|tp(p-8q)|=|z-8\varepsilon|=7,15,23$ or $31$.
But this is not possible since  $|t|$, $|p|$, and $|p - 8q|$ are all greater than 1.
\end{proof}

\begin{proof}[Proof of Theorem \ref{non-band10cat}]
Suppose $k=5$ and $\frac{z}{v}=\frac{4mn-1}{2m}=\pm\frac{19}{2}, \pm\frac{31}{4}$ or $\pm\frac{35}{6}$.
If $U$ is rational, then Theorem \ref{non-band10cat} holds by Corollary \ref{Cor:D}.
Suppose $U$ is not rational.  
Then $|tp(p-10q)|=|z-10\varepsilon|=9,21,25,29,41$ or $45$.  
Since  $|t|$, $|p|$, and $|p - 10q|$ are all greater than 1, the only
 possibility is that $|z-10\varepsilon|=45$ and $|p(p-10q)|=9$ or $15$.  
But there is no integer solution to $|p(p-10q)|=9$ or $15$ where $|p|$ and $|p - 10q|$ are both greater than one.
\end{proof}


The action of Xer recombination is believed to correspond to a $(-\frac{1}{3}, -\frac{4}{3})$ move.
By Theorem \ref{t-ratequiv}, a $(-\frac{1}{3}, -\frac{4}{3})$ move is equivalent to 
a $(0,\frac{9}{9l+5})$ move for any integer $l$.
Moreover, $N(U+0)=K_1$ and $N(U+\frac{9}{5})=K_2$ if and only if 
$N([U\circ (1,2,0)]+(-\frac{1}{3}))=K_1$ and $N([U\circ (1,2,0)]+(-\frac{4}{3}))=K_2$, 
since $N(U+0)=N([U\circ (1,2,0)\circ (-2,-1)]+0)=N([U\circ (1,2,0)]+[(0)\circ (-1,-2,0)])=N([U\circ (1,2,0)]+(-\frac{1}{3}))$ 
and $N(U+\frac{9}{5})=N(U+(-\frac{4}{3})\circ (2,1))=N([U\circ (1,2,0)]+(-\frac{4}{3}))$.
Then we obtain following corollaries from Theorem \ref{non-band6cat}, \ref{non-band8cat} and \ref{non-band10cat}.
\begin{cor}\label{-1/3to-4/36cat}
Suppose $N(U+(-\frac{1}{3}))=N(6)$.
\begin{enumerate}
\item If $N(U+(-\frac{4}{3}))=7_2$ $(\frac{z}{v}=\pm\frac{11}{2})$, then it has no solution.
\item If $N(U+(-\frac{4}{3}))=7_4$ $(\frac{z}{v}=\pm\frac{15}{4})$, then $U=(-\frac{1}{3})$.
\end{enumerate}
\end{cor}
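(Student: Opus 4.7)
The plan is to use the equivalence established in the paragraph preceding the corollary: $U$ satisfies $N(U+0)=K_1$ and $N(U+\frac{9}{5})=K_2$ if and only if $V:=U\circ(1,2,0)$ satisfies $N(V+(-\frac{1}{3}))=K_1$ and $N(V+(-\frac{4}{3}))=K_2$. Since $t=9\neq\pm 1$, Theorem~\ref{non-band6cat} enumerates all solutions $U$ to the standardized $(0,\frac{9}{5})$ problem with $N(U+0)=N(6)$; I would solve that problem and then translate back via the circle product.

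Both parts reduce to a Diophantine equation obtained by equating $\frac{9}{5}$ with the parameterizing fraction in Theorem~\ref{non-band6cat}. Setting $y=z-6v'$ (with $z=11$ in part~(1) and $z=15$ in part~(2)) and clearing denominators yields $y(13+18h)=3z$. In part~(1) this is $y(13+18h)=33$; reducing modulo $3$ forces $3\mid y$, but $y=11-6v'\equiv 2\pmod 3$, a contradiction. Hence no $U$ exists, and therefore no $V$ does either, proving (1).

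For part~(2), $y(13+18h)=45$. Since $13+18h\equiv 13\pmod{18}$, checking the divisors of $45$ shows that the unique admissible value is $13+18h=-5$, giving $h=-1$, $y=-9$, and $v'=4$; and indeed $4\equiv 4^{\pm 1}\pmod{15}$. Theorem~\ref{non-band6cat}(2) then returns $U=\frac{6}{6h+1}=-\frac{6}{5}$. To finish, I would compute $V=-\frac{6}{5}\circ(1,2,0)$ by chasing the three alternating (vertical, horizontal, vertical) twists of the circle product starting from the fraction $-\frac{6}{5}$: the vertical twist $c_1=1$ gives $-\frac{6}{5}+1=-\frac{1}{5}$, then the horizontal twist $c_2=2$ gives $\frac{1}{-5+2}=-\frac{1}{3}$, and finally the vertical twist $c_3=0$ leaves $-\frac{1}{3}$. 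Thus $V=-\frac{1}{3}$, as asserted. The main obstacle is the uniqueness argument in part~(2): one must verify that the modular constraint $13+18h\equiv 13\pmod{18}$ rules out every divisor of $45$ except $-5$, and then confirm by direct computation of the circle product that the resulting $U=-\frac{6}{5}$ translates back to exactly the tangle $-\frac{1}{3}$.
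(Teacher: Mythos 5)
Your proposal is correct and follows exactly the route the paper intends: translate the $(-\frac{1}{3},-\frac{4}{3})$ system to the standardized $(0,\frac{9}{5})$ system, apply Theorem~\ref{non-band6cat} (valid since $t=9\neq\pm1$), solve the resulting Diophantine condition $y(13+18h)=3z$ for $h$ and $v'$, and translate back via $U\circ(1,2,0)$; both the nonexistence argument mod $3$ in part (1) and the unique solution $h=-1$, $v'=4$, $U=-\frac{6}{5}$, $V=-\frac{1}{3}$ in part (2) check out. The only nit is that you have swapped the labels ``vertical'' and ``horizontal'' in the circle-product computation relative to the paper's convention, but the arithmetic you perform ($f\mapsto f+c_1$, then $f\mapsto 1/(c_2+1/f)$, then $+c_3$) is the correct one, as confirmed by the paper's own identity $(-\frac{4}{3})\circ(2,1)=\frac{9}{5}$.
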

\begin{cor}\label{-1/3to-4/38cat}
Suppose $N(U+(-\frac{1}{3}))=N(8)$.
\begin{enumerate}
\item If $N(U+(-\frac{4}{3}))=9_2$ $(\frac{z}{v}=\pm\frac{15}{2})$, it has no solution.
\item If $N(U+(-\frac{4}{3}))=9_5$ $(\frac{z}{v}=\pm\frac{23}{4})$, then $U=(-\frac{1}{5})$.
\end{enumerate}
\end{cor}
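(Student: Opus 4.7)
The plan is to mimic the proof of the adjacent Corollary \ref{-1/3to-4/36cat}. Using the equivalence noted just before that corollary---namely, that $N(U + 0) = K_1$ and $N(U + \frac{9}{5}) = K_2$ if and only if $N([U \circ (1,2,0)] + (-\frac{1}{3})) = K_1$ and $N([U \circ (1,2,0)] + (-\frac{4}{3})) = K_2$---I will reduce the system in Corollary \ref{-1/3to-4/38cat} to finding an auxiliary tangle $U'$ satisfying $N(U' + 0) = N(8)$ and $N(U' + \frac{9}{5}) = 9_2$ (resp.\ $9_5$), and then recover the desired $U$ as $U = U' \circ (1, 2, 0)$. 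Since $t = 9 \neq \pm 1$, Theorem \ref{non-band8cat} applies and provides the complete list of candidate $U'$.

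For part (1), substituting $\frac{t}{w} = \frac{9}{5}$ into Theorem \ref{non-band8cat}(1) yields the Diophantine equation $v'(49 + 72h) = 15(5 + 9h)$ subject to $v' \equiv \pm 2^{\pm 1} \pmod{15}$. Since any admissible $v'$ is coprime to $15$, reducing modulo $15$ forces $4 + 12h \equiv 0 \pmod{15}$, i.e.\ $12h \equiv -4 \pmod{15}$, which is unsolvable because $\gcd(12,15) = 3$ does not divide $4$. Hence no such $U'$ exists, so no $U$ exists either.

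For part (2), the analogous substitution gives $v'(49 + 72h) = 23(5 + 9h)$ with $v' \equiv \pm 4^{\pm 1} \pmod{23}$. Reducing mod $23$ gives $3(1 + h) \equiv 0 \pmod{23}$, i.e.\ $h \equiv -1 \pmod{23}$. A short size-comparison argument on the remaining equation $v'(72m - 1) = 207m - 4$ (obtained by writing $h = -1 + 23m$) will show that the only integer solution is $(m, v') = (0, 4)$, giving $h = -1$ and $U' = \frac{8}{8h+1} = -\frac{8}{7}$. Finally I will compute $U = U' \circ (1, 2, 0)$ by applying the prescribed alternating horizontal/vertical/horizontal twists: $-\frac{8}{7} + 1 = -\frac{1}{7}$, then a vertical $+2$-twist sends $-\frac{1}{7}$ to $-\frac{1}{5}$, and the final $0$-twist leaves $-\frac{1}{5}$, so $U = -\frac{1}{5}$ as claimed.

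The main obstacle is the bookkeeping in part (2): one has to confirm that among all $h$ in the residue class $-1 \pmod{23}$ only $h = -1$ yields an admissible integer $v'$, which reduces to checking a divisibility condition that fails by size for $m \neq 0$. A secondary subtlety is applying the circle product convention in the correct order of horizontal and vertical twists; this alternation is easy to invert if one is not careful about the parity of the length of $(1, 2, 0)$.
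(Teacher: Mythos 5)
Your proposal is correct and follows exactly the route the paper intends: reduce via the stated equivalence to the system $N(U'+0)=N(8)$, $N(U'+\frac{9}{5})=9_2$ (resp.\ $9_5$), apply Theorem \ref{non-band8cat}, and transform back by $U=U'\circ(1,2,0)$. The arithmetic (the congruence obstruction mod $15$ in part (1), the reduction to $h\equiv -1 \pmod{23}$ and the divisor/size argument forcing $(m,v')=(0,4)$ in part (2), and the circle-product computation $-\frac{8}{7}\mapsto-\frac{1}{7}\mapsto-\frac{1}{5}$) all checks out.
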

\begin{cor}\label{-1/3to-4/310cat}
Suppose $N(U+(-\frac{1}{3}))=N(10)$.
\begin{enumerate}
\item If $N(U+(-\frac{4}{3}))=11a247$ $(\frac{z}{v}=\pm\frac{19}{2})$, then it has no solution.
\item If $N(U+(-\frac{4}{3}))=11a343$ $(\frac{z}{v}=\pm\frac{31}{4})$, then $U=(-\frac{1}{7})$.
\item If $N(U+(-\frac{4}{3}))=11a363$ $(\frac{z}{v}=\pm\frac{35}{6})$, then it has no solution.
\end{enumerate}
\end{cor}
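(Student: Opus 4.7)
The plan is to reduce the $(-\frac{1}{3}, -\frac{4}{3})$ move to an equivalent $(0, \frac{9}{5})$ move via the substitution $U = U' \circ (1, 2, 0)$ described just before Corollary \ref{-1/3to-4/36cat}, and then invoke Theorem \ref{non-band10cat}. Under this substitution, the system $N(U + (-\frac{1}{3})) = N(10)$, $N(U + (-\frac{4}{3})) = K_2$ is equivalent to $N(U' + 0) = N(10)$, $N(U' + \frac{9}{5}) = K_2$. Since $t = 9 \neq \pm 1$, Theorem \ref{non-band10cat} parametrizes all such $U'$ by integer pairs $(h, v')$ with $v' \equiv \pm v^{\pm 1} \pmod{z}$, and $U' = \frac{10}{10h+1}$.

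Substituting $\frac{t}{w} = \frac{9}{5}$ into the formula $\frac{t}{w} = \frac{z - 10v'}{v' - (z - 10v')h}$ and clearing denominators yields the Diophantine relation $(59 + 90h)\, v' = z\,(5 + 9h)$. Since $59 + 90h = 10(5 + 9h) + 9$, we have $\gcd(59 + 90h,\, 5 + 9h) = \gcd(9, 5) = 1$, so integrality of $v'$ forces $(59 + 90h) \mid z$. Moreover $59 + 90h \equiv 59 \pmod{90}$ holds automatically.

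For parts (1) and (3) the plan is to enumerate the divisors of $z$ modulo $90$: for $z = 19$ the residues are $\{1, 19, 71, 89\}$ and for $z = 35$ they are $\{1, 5, 7, 35, 55, 83, 85, 89\}$, neither set containing $59$. Hence no integer $h$ exists and there is no solution. For part (2), the divisors of $z = 31$ reduce modulo $90$ to $\{1, 31, 59, 89\}$, and $-31 \equiv 59 \pmod{90}$ gives the unique choice $h = -1$. Then $v' = 31(5 - 9)/(-31) = 4$, which satisfies $v' \equiv 4 \equiv v \pmod{31}$. Hence $U' = \frac{10}{-9}$, and a direct continued-fraction computation of the circle product gives
\[
U = U' \circ (1, 2, 0) = 0 + \frac{1}{2 + \dfrac{1}{1 + \frac{10}{-9}}} = \frac{1}{2 - 9} = -\frac{1}{7},
\]
matching the claimed answer.

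The main obstacle is merely the careful residue enumeration in the divisibility condition; no conceptual difficulty arises once the reduction to Theorem \ref{non-band10cat} is in place. The argument parallels those needed for Corollaries \ref{-1/3to-4/36cat} and \ref{-1/3to-4/38cat}, differing only in the size of the modular search and the verification of the congruence constraint on $v'$.
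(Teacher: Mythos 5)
Your proposal is correct and follows exactly the route the paper takes: it reduces the $(-\frac{1}{3},-\frac{4}{3})$ move to the $(0,\frac{9}{5})$ move via $U=U'\circ(1,2,0)$ and then filters the solutions of Theorem \ref{non-band10cat}, which is precisely how the paper obtains Corollaries \ref{-1/3to-4/36cat}--\ref{-1/3to-4/310cat} (the paper merely omits the divisibility arithmetic $(59+90h)\mid z$ that you carry out, and your enumeration and the resulting $h=-1$, $v'=4$, $U=-\frac{1}{7}$ are all correct).
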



\section{Summary or Conclusion or Software}\label{summary}

Per theorems \cite{E, D_unor}, the system of tangle equations $N(U + B) = N(\frac{a}{b})$, $N(U + E) = N(\frac{z}{v})$ is easily solved when the $(B, E)$ move is equivalent to a $(0, \frac{t}{w})$ move where $|t| > 1$.  The case when $|t| = 1$ is much more difficult.  A few special subcases when  $|t| = 1$ can be handled using results in \cite{HS, MR2299739}.  Our theorem \ref{Thm:rts} handles the subcase when the move corresponds to a coherent banding and $N(\frac{a}{b}), N(\frac{z}{v}) \in \{N(2k), N(\frac{4mn - 1}{2m})\}$.  This subcase is particularly   biologically relevant since $N(\frac{4mn - 1}{2m})$ includes the family of twist knots, and we also applied it  to analyze the experimental results of Xer recombination acting  on $(2, 2k)$-torus links \cite{BSC}.  

The software TopoIce-R \cite{Darcy15072006} within Knotplot \cite{SchPhD} solves the system of tangle equations $N(U + B) = N(\frac{a}{b})$, $N(U + E) = N(\frac{z}{v})$ when $U$ is a generalized $M$-tangle (i.e., ambient isotopic to a sum of rational tangles) and $B$ and $E$ are rational tangles.  When the $(B, E)$ move is equivalent to a $(0, \frac{t}{w})$ move where $|t| > 1$, then 
 $U$ must be a   generalized $M$-tangle by Theorem \ref{E}.  However, other types of tangles can be solutions for $U$ when $|t| = 1$ \cite{D_xer}.  These solutions are not currently found by TopoICE-R. 
  Note that the solutions for $U$  in theorem \ref{Thm:rts} in which  $|t| = 1$ are generalized $M$-tangle.
 Hence for the cases in Theorem \ref{Thm:rts}, TopoIce-R finds all solutions to this system of tangle equations.  The solutions found by TopoICE-R correspond to performing surgery on a $(p, q)$ torus knot in the double branch cover of $N(\frac{a}{b})$.

\section*{Acknowledgments}
K.I. is partially supported by EPSRC Grant EP/H031367/1 to D.Buck.
K.S. is partially supported by KAKENHI 18540069 and 22540066.  
This research was supported in part by a grant from the Joint DMS/NIGMS
 Initiative to Support Research in the Area of Mathematical Biology (NSF 0800285) to I.D.

\baselineskip 4.8mm
   \bibliographystyle{amsplain}
   \bibliography{xerbib}

\end{document}